\definecolor{red}{rgb}{1,0,0}
\definecolor{blue}{rgb}{0,0,1}
\definecolor{green}{rgb}{0,.6,0}
\newtheorem{thm}{Theorem}[section]
\newtheorem{cor}[thm]{Corollary}
\newtheorem{prop}[thm]{Proposition}
\newtheorem{conj}[thm]{Conjecture}
\theoremstyle{definition}
\theoremstyle{definition}
\newtheorem{defn}[thm]{Definition}
\theoremstyle{definition}
\newcommand{\dist}{\operatorname{dist}}
\newcommand{\bit}{\begin{itemize}}
\newcommand{\eit}{\end{itemize}}
\newcommand{\ben}{\begin{enumerate}}
\newcommand{\een}{\end{enumerate}}
\newcommand{\beq}{\begin{equation}}
\newcommand{\eeq}{\end{equation}}
\newcommand{\bea}{\begin{eqnarray}} 
\newcommand{\eea}{\end{eqnarray}}
\newcommand{\bpf}{\begin{proof}}
\newcommand{\epf}{\end{proof}\ms}
\newcommand{\bmt}{\begin{bmatrix}}
\newcommand{\emt}{\end{bmatrix}}
\newcommand{\ms}{\medskip}
\newcommand{\noi}{\noindent}
\newcommand{\beqs}{\begin{equation*}} 
\newcommand{\eeqs}{\end{equation*}}
\newcommand{\beas}{\begin{eqnarray*}}
\newcommand{\eeas}{\end{eqnarray*}}
\newcommand{\dmg}{\operatorname{dmg}}
\title{The multi-robber damage number of a graph}
\author{Joshua Carlson
\thanks{Department of Mathematics and Computer Science, Drake University, Des Moines, IA, USA (joshua.carlson@drake.edu)}
\and Meghan Halloran
\thanks{Department of Mathematics and Statistics, Williams College, Williamstown, MA, USA (meghanhalloran7@gmail.com)}
\and Carolyn Reinhart
\thanks{Department of Mathematics and Statistics, Swarthmore College, Swarthmore, PA, USA (creinha1@swarthmore.edu)}}
\date{\today}
\begin{document}

\maketitle

\begin{abstract}
In many variants of the game of Cops and Robbers on graphs, multiple cops play against a single robber. In 2019, Cox and Sanaei introduced a variant of the game that gives the robber a more active role than simply evading the cop. In their version, the robber tries to damage as many vertices as possible and the cop attempts to minimize this damage. While the damage variant was originally studied with one cop and one robber, it was later extended to play with multiple cops by Carlson et.~al in 2021. We take a different approach by studying the damage variant with multiple robbers against one cop. Specifically, we introduce the $s$-robber damage number of a graph and obtain a variety of bounds on this parameter. Applying these bounds, we determine the $s$-robber damage number for a variety of graph families and characterize graphs with extreme $2$-robber damage number.
\end{abstract}

\noi {\bf Keywords} Cops and Robbers, Damage number

\noi{\bf AMS subject classification} 05C57, 05C15, 05C50

\section{Introduction}

Cops and Robbers is a perfect information pursuit-evasion game played on simple graphs that was introduced in \cite{NW83, Q78}. Originally, the game was played with two players (cop and robber) that move from vertex to vertex by traversing the edges of the graph. The game is initialized in round $0$ when (starting with the cop) both players choose an initial vertex to occupy. Then, each subsequent round consists of a turn for the cop followed by a turn for the robber where each player has the opportunity to (but is not required to) move to a neighboring vertex on their turn. Of course, if the cop ever occupies the same vertex as the robber, the robber is said to be \emph{captured} and the game ends in victory for the cop. Alternatively, if the robber has a strategy to avoid capture forever, the robber wins the game.

In \cite{AF84}, the authors consider a version of the game with more players. Specifically, a team of $k$ cops plays against a single robber. In this version, each round consists of a turn for the team of cops followed by a turn for the robber where on the cops turn, each cop has the opportunity to move. As in the original game, in round $0$, each cop chooses their initial position before the robbers' position is initialized. This multi-cop version of the game leads to the main parameter of interest in the study of cops and robbers. The \emph{cop number} of a graph $G$, denoted $c(G)$, is the smallest number of cops required for the cop team to guarantee capture of the robber on $G$.

There are many variations of cops and robbers that have been studied in which it is interesting to consider multiple players on the cop team (see \cite{AF84, BMPP16, BPPR17, FHMP16}). Other variants slightly alter the objectives of the players. One such version, introduced in \cite{CS19}, states that if a vertex $v$ is occupied by the robber at the end of a given round and the robber is not caught in the following round, then $v$ becomes \emph{damaged}. In this version of the game, rather than trying to capture the robber, the cop is trying to minimize the number of damaged vertices. Additionally, the robber plays optimally by damaging as many vertices as possible.

The damage variation of cops and robbers leads to another parameter of interest. The \emph{damage number} of a graph $G$, denoted $\dmg(G)$, is the minimum number of vertices damaged over all games played on $G$ where the robber plays optimally. Although the damage variant was introduced with a singe cop and robber, in \cite{CEGPRS21}, the authors extended the idea of damage to games played with $k$ cops against one robber. Specifically, they introduce the \emph{$k$-damage number} of a graph $G$, denoted $\dmg_k(G)$, which is defined analogously to $\dmg(G)$. 

Note that when the goal of the cops is simply to capture the robber, there is no reason to add players to the robber team because a strategy of the cop team to capture one robber is sufficient for repeatedly capturing additional robbers. However, in the damage variant, it the robber who is the more active player since their goal is to damage as many vertices as possible. This creates a somewhat rare situation where it becomes interesting to play with multiple robbers and one cop. We now generalize the damage number in a new way with the following definition.

\begin{defn}
Suppose $G$ is a simple graph. The \emph{$s$-robber damage number} of $G$, denoted $\dmg(G;s)$, is the minimum number of vertices damaged in $G$ over all games played on $G$ where $s$ robbers play optimally against one cop. Note that optimal play for the robbers is still to damage as many vertices as possible.
\end{defn}

The $s$-robber damage number is the main focus of this paper. All graphs we consider are finite, undirected, and simple. We adhere to most of the graph theoretic and Cops and Robbers notation found in \cite{Diestel} and \cite{CRbook} respectively. In Section \ref{sec:generalBounds}, we establish some general bounds on $\dmg(G;s)$ in terms of the number of vertices and the number of robbers. We focus on $\dmg(G;2)$ in Section \ref{subsec:2generalBounds}, providing an upper for graphs with maximum degree at least three. Then, in Section \ref{sec:srobberFamilies}, we determine $\dmg(G;s)$ for various graph families, including paths, cycles, and stars. Finally, in Section \ref{sec:extreme2robber}, we characterize the graphs with extreme values of $\dmg(G;2)$. Interestingly, we show that threshold graphs are exactly the graphs with $\dmg(G;2)=1$.

\section{General results on the $s$-robber damage number}\label{sec:generalBounds}

We begin by establishing bounds on the $s$-robber damage number. Throughout this section, we find upper bounds by describing a cop strategy which limits damage to some number of vertices and we find lower bounds by describing a robber strategy for which some number of vertices are always damaged. First, we find a general lower bound for all graphs on $n$ vertices.

\begin{prop}\label{prop:damageAtLeastSMinus1}
Suppose $G$ is a graph on $n$ vertices. If $s\leq n-1$, then $\dmg(G; s) \geq s-1$ and if $s\geq n$, then $\dmg(G; s) \geq n-2$.
\end{prop}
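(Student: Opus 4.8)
The plan is to exhibit, for each regime of $s$, a robber strategy that guarantees the stated number of damaged vertices regardless of the cop's play. For the first case $s \le n-1$, I would have the $s$ robbers choose $s$ distinct vertices in round $0$ (possible since $s \le n-1 < n$), after the cop has chosen a vertex $v$. At most one of these $s$ robbers occupies $v$, so at least $s-1$ of them occupy vertices distinct from the cop. On the cop's first turn, the cop can move to at most one vertex, hence can threaten (and thus force to flee or capture) at most one robber; but the key point is simpler than a full pursuit argument: in round $1$, each robber that is not on the cop's vertex and is not captured on the cop's move will have its round-$0$ vertex damaged. The cop's single move in round $1$ can capture at most one robber, so at least $s-1$ of the round-$0$ vertices become damaged. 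I should be slightly careful that the $s-1$ surviving robbers sit on $s-1$ \emph{distinct} vertices so the damage count is genuinely $s-1$; this holds because the robbers picked distinct starting vertices. This gives $\dmg(G;s)\ge s-1$.

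For the second case $s \ge n$, the idea is that there are more robbers than vertices, so by pigeonhole the robbers can occupy every vertex of $G$ in round $0$. After the cop picks a vertex $v$, place robbers so that all $n$ vertices are occupied (using $s\ge n$ robbers). In round $1$ the cop moves from $v$ to some vertex $w$ (or stays); the cop's move can ``save'' at most the vertices $v$ and $w$ from being damaged — $v$ is not damaged because the robber(s) there were caught, and $w$ is not damaged because the cop now sits on it, so any robber there is captured before the end of the round. Every other vertex was occupied at the end of round $0$ and its occupant is not captured in round $1$, so it becomes damaged. That leaves at least $n-2$ damaged vertices. A subtlety: one must confirm that a robber sitting on $w$ at the end of round $0$ who is then captured does not prevent that vertex from being counted, and that no other mechanism spares additional vertices; since only one cop moves once, at most two vertices ($v$ and $w$, possibly equal) can be spared, so the bound $n-2$ is safe even in the worst case $v\neq w$.

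The main obstacle, and the part deserving the most care in writing, is making the ``damage in round $1$'' argument airtight against all cop responses: I must argue that the cop, having only one token and one move per round, cannot retroactively prevent damage to a vertex whose occupant survives round $1$, and cannot capture more than one robber in that single move. This is where the multi-robber aspect matters — with one robber the cop's single move suffices to shadow it, but with $s$ robbers on distinct vertices the cop is outnumbered from the start. I would phrase this as: ``On the cop's turn in round $1$, the cop occupies one vertex; at most one robber occupies that vertex and is captured; every other robber is on a vertex distinct from the cop's and is not captured this round, so that robber's round-$0$ vertex is damaged,'' and then count distinct such vertices. No deeper structural facts about $G$ are needed; the bound is purely a counting argument on round $0$ and round $1$.
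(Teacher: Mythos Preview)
Your approach is the same as the paper's: exhibit a robber placement and count the damage that must accrue after round~1. There is, however, a small slip in the case $s\le n-1$. You place the $s$ robbers on $s$ distinct vertices and then write ``at most one of these $s$ robbers occupies $v$,'' going on to conclude that the cop's single round-1 move leaves at least $s-1$ damaged vertices. But if one robber actually sits on $v$, that robber is captured in round~0, and the cop may still capture a \emph{second} robber by moving in round~1; your premises then only yield $s-2$ survivors, not $s-1$. The fix is immediate and is exactly what the paper does: since $s\le n-1$, there are at least $s$ vertices in $V(G)\setminus\{v\}$, so place all $s$ robbers there (your own parenthetical ``possible since $s\le n-1$'' already hints at this). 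Then only the single round-1 capture is possible and the bound $s-1$ follows cleanly.

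Your treatment of the case $s\ge n$ is fine; covering all $n$ vertices (rather than just $V(G)\setminus\{v\}$ as in the paper) is harmless, since the only two vertices the cop can possibly protect in round~1 are $v$ and its round-1 destination.
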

\begin{proof}
Let the cop start on any vertex $v$. If $s\leq n-1$, place all of the robbers on separate vertices in $V(G) \setminus \{v\}$. The cop can only capture at most 1 robber in the first round, therefore at least $s-1$ vertices will be damaged. If $s\geq n$, then place at least one robber on each vertex of $V(G) \setminus \{v\}$. In the first round, if the cop moves to capture a robber, they can prevent damage to at most one vertex in $V(G) \setminus \{v\}$. The only other vertex which will not be damaged in the first round is $v$. Therefore, at least $n-2$ vertices will be damaged.
\end{proof}

We now provide a lower bound for all graphs on $n\geq 2$ vertices with at least one edge. Note that we later compute the $s$-robber damage number of the empty graph in Proposition \ref{prop:Empty}.

\begin{prop}\label{prop:damageAtMostNMinus2}
Suppose $G$ is a graph on $n \geq 2$ vertices with at least 1 edge. Then $\dmg(G; s) \leq n-2$ for each $s \geq 1$.
\end{prop}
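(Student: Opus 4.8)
The plan is to exhibit a cop strategy that always leaves at least two vertices undamaged, regardless of how many robbers are present. Since $G$ has at least one edge, fix an edge $uv \in E(G)$. The idea is to have the cop start on $u$ and, on the cop's first turn, move to $v$ (or stay, depending on where the robbers are). Intuitively $u$ is never damaged because the cop occupies it at the end of round $0$, and a vertex occupied by the cop at the end of a round is never damaged; we then want to argue that $v$ is also protected.

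First I would recall the precise damage rule: a vertex $w$ becomes damaged only if some robber sits on $w$ at the end of a round \emph{and} is not captured on the following cop turn. So $u$ is safe: no robber can occupy $u$ at the end of round $0$ (the cop is there), hence $u$ cannot be the ``previous'' vertex of any robber entering round $1$; and for $u$ to be damaged in a later round a robber would have to move onto $u$ while the cop has left, which the cop strategy will forbid by construction. Concretely, the cop's plan is: in round $1$, move to $v$; thereafter, shuttle between $u$ and $v$ along the edge $uv$ in whatever pattern is needed so that (i) the cop is on one endpoint of $uv$ at the end of every round, and (ii) any robber that steps onto $u$ or $v$ is captured on the cop's next turn. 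Because $u$ and $v$ are adjacent, the cop can always be ``one step'' from either endpoint, so any robber occupying $u$ or $v$ at the end of a round is caught next turn and never damages that vertex.

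The one subtlety to handle carefully is round $0$ to round $1$: a robber could start on $v$ in round $0$. Then $v$ is damaged unless the cop captures that robber in round $1$ — which the cop does, precisely by moving $u \to v$. If instead no robber starts on $v$, the cop still moves to $v$ (or could stay on $u$, but moving to $v$ is cleanest), and then no robber can ever damage $v$ thereafter since the cop will be adjacent to $v$ at the end of every subsequent round and captures anything that lands on $v$. I would write this out as a short case analysis on whether a robber occupies $v$ at the end of round $0$, in both cases concluding that neither $u$ nor $v$ is ever damaged, so at most $n-2$ vertices are damaged and $\dmg(G;s)\le n-2$.

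The main obstacle is bookkeeping rather than ideas: one must verify that the ``shuttle along $uv$'' strategy really does keep the cop positioned to capture any intruder on $\{u,v\}$ on the very next turn, for \emph{every} round and against \emph{all} $s$ robbers simultaneously. The key observation making this work is that protecting $\{u,v\}$ only ever requires the cop to respond to a robber that is currently on $u$ or on $v$, and from either of those two vertices the cop can reach the other in one move; so there is no conflict among robbers — a single cop shuttling on the edge handles all of them. I would state this as the crux of the argument and keep the rest as routine verification of the damage rule.
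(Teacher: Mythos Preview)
Your proposal is correct and takes essentially the same approach as the paper: fix an edge, start the cop on one endpoint, and have the cop toggle along that edge so that any robber landing on either endpoint is captured on the very next cop turn, leaving both endpoints undamaged. The paper's version is just a terser statement of exactly this shuttle argument; your added bookkeeping about round $0$ versus later rounds and the observation that the cop, being on one endpoint, is always one step from the other (so multiple robbers create no conflict) is precisely what justifies the paper's one-line claim.
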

\begin{proof}
Consider a cop strategy where the cop starts on a vertex $v$ with positive degree and toggles between $v$ and one of its neighbors $u$. If the robber moves to $u$ or $v$, the cop either captures the robber immediately or moves to capture the robber in the following round. Since the cop can prevent at least two vertices from being damaged, $\dmg(G; s) \leq n-2$. 
\end{proof}

The combination of Propositions \ref{prop:damageAtLeastSMinus1} and \ref{prop:damageAtMostNMinus2} yields an immediate corollary in the case where the number of robbers is at least the number of vertices.

\begin{cor}
Suppose $G$ is a graph on $n \geq 2$ vertices with at least 1 edge. If $s\geq n$, then $\dmg(G; s) = n-2$. 
\end{cor}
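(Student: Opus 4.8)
The plan is to obtain the result by sandwiching $\dmg(G;s)$ between matching lower and upper bounds that are already available. Since $s \geq n$, the second case of Proposition \ref{prop:damageAtLeastSMinus1} applies directly and yields $\dmg(G;s) \geq n-2$. For the reverse inequality, note that $G$ has $n \geq 2$ vertices and at least one edge, and that $s \geq n \geq 2 \geq 1$, so the hypotheses of Proposition \ref{prop:damageAtMostNMinus2} are met; it gives $\dmg(G;s) \leq n-2$. Combining the two inequalities forces $\dmg(G;s) = n-2$.

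There is essentially no obstacle here beyond checking that the hypotheses of the two propositions hold simultaneously, which is immediate: the lower bound only needs $s \geq n$, and the upper bound only needs $n \geq 2$, at least one edge, and $s \geq 1$, all of which follow from the stated assumptions. Thus the corollary is a one-line consequence of the preceding two results, and the proof simply records this combination.
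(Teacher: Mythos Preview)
Your proposal is correct and matches the paper's approach exactly: the paper presents this corollary as an immediate consequence of combining Propositions \ref{prop:damageAtLeastSMinus1} and \ref{prop:damageAtMostNMinus2}, without even writing out a separate proof.
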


Since we are considering graphs which are not necessarily connected, it is useful to compute the $s$-robber damage number of the disjoint union of graphs. In the case of a graph with two disjoint components, we can compute the $s$-robber damage number as follows.
\begin{prop}
For $s \geq 1$ and graphs $G$ and $H$, let $\ell = \max\{\dmg(G;s-1) + |H|, \dmg(G;s)\}$ and $r = \max\{\dmg(H;s-1) + |G|, \dmg(H;s)\}$. Then, $\dmg(G \cup H; s) = \min \{ \ell, r\}$ .
\end{prop}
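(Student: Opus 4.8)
The plan is to prove the two inequalities $\dmg(G \cup H; s) \le \min\{\ell, r\}$ and $\dmg(G \cup H; s) \ge \min\{\ell, r\}$ separately, where the key structural fact is that the cop, occupying some vertex in (say) $G$, can never leave the component $G$, so the two components are essentially played independently except that the robbers are partitioned between them at the start and the cop commits to one component.

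For the upper bound $\dmg(G\cup H;s)\le\min\{\ell,r\}$, I would describe a cop strategy. By symmetry, suppose the cop decides to play in $G$; I claim this limits damage to $\ell = \max\{\dmg(G;s-1)+|H|,\,\dmg(G;s)\}$. The robbers distribute themselves: say $j$ of them land in $H$ and $s-j$ in $G$. Every vertex of $H$ may become damaged (the cop is absent and cannot help there), so at most $|H|$ damage occurs in $H$, and this is only realized if $j\ge 1$. In $G$, the cop faces $s-j \le s$ robbers and plays the optimal single-component strategy, limiting damage in $G$ to $\dmg(G;s-j) \le \dmg(G;s)$ (monotonicity in the number of robbers, which follows since extra robbers only help the robber team). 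If $j = 0$, total damage is at most $\dmg(G;s)$; if $j \ge 1$, total damage is at most $\dmg(G;s-1) + |H|$ since $\dmg(G;s-j)\le\dmg(G;s-1)$. Either way the total is at most $\ell$. The cop picks whichever of $G$, $H$ gives the smaller bound, yielding $\dmg(G\cup H;s)\le\min\{\ell,r\}$.

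For the lower bound $\dmg(G\cup H;s)\ge\min\{\ell,r\}$, I would fix the cop's strategy; the cop must start in one component, say $G$ (the argument is symmetric), and can never move to $H$. I then exhibit a robber strategy forcing at least $\ell$ damage. Place one robber in $H$ and have the remaining $s-1$ robbers in $G$ play optimally against the cop; since the cop never enters $H$, after the first round every vertex of $H$ gets damaged — wait, this needs care: a single robber in $H$ damages only the vertices it can reach, not all of $|H|$. So instead the robbers should put enough robbers in $H$ to damage all of $H$ and the rest in $G$. Concretely, if $s \ge |H|$ we can occupy every vertex of $H$, damaging all $|H|$ vertices, and still have $s - |H|$ robbers in $G$; but for the bound as stated we want $\dmg(G;s-1)+|H|$, suggesting the intended robber play is: put one robber on each vertex of $H$ only when helpful, and otherwise use the decomposition recursively. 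The cleanest route is: among the robbers, commit as many as needed so that $H$ is fully damaged while keeping at least $s-1$ effective in — no. The correct reading is that the robbers choose to send some robbers to $H$ achieving damage there, and the lower bound $\ell$ records the \emph{best} they can do, namely $\max\{\dmg(G;s-1)+|H|,\dmg(G;s)\}$: the first term is the payoff of sending enough robbers into $H$ to damage all of it (which requires the robbers to be able to fully damage $H$ — here one must check that $s-1$ robbers suffice to leave $H$ fully damaged, or reinterpret $|H|$ via an induction on $s$), and the second term is the payoff of leaving all robbers in $G$.

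The main obstacle is exactly this subtlety in the lower-bound direction: asserting that sending robbers into the cop-free component $H$ damages \emph{all} of $H$. Since the cop is absent from $H$, any robber in $H$ that is allowed one free move will damage its starting vertex, and by moving around it can damage its whole component of $H$; so the honest statement uses an induction on $s$ together with the observation that in a cop-free graph $s'$ robbers damage $\min\{|H|, \text{(reachable total)}\}$ vertices — and in fact a single robber in a cop-free connected graph eventually damages every vertex it can reach. I expect the paper handles this by an induction on $s$: the recursive term $\dmg(G;s-1)+|H|$ arises because the robbers "spend" their presence in $H$ to fully damage it (using that $H$ is cop-free, so even one robber per component of $H$ suffices over time, and the remaining $s-1$ play optimally in $G$), while matching the upper bound forces the min over the two choices of which component the cop defends. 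I would therefore structure the lower bound as: (1) the cop commits to a component, WLOG $G$; (2) the robbers respond either by flooding $H$ (one robber suffices per component, the rest play $\dmg(G;s-1)$ optimally in $G$, total $\ge \dmg(G;s-1)+|H|$) or by sending everyone into $G$ (total $\ge \dmg(G;s)$); (3) take the max, then the cop's min over $G$ vs.\ $H$ gives $\min\{\ell,r\}$.
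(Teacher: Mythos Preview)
Your approach is essentially the same as the paper's: split according to which component the cop starts in, give the cop strategy for the upper bound, and for the lower bound let the robbers either all stay with the cop (yielding $\dmg(G;s)$) or send some to the cop-free side to damage all of it while the rest play optimally in $G$ (yielding $\dmg(G;s-1)+|H|$), then take the max and finally the cop's min.

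The one place you hesitate is exactly where the paper is most casual. Where you worry about whether a single robber can damage all of $H$, the paper simply asserts: ``since the cop is not on $H$, one robber in $H$ is enough to damage all $|H|$ vertices,'' and sends the remaining $s-1$ robbers to $G$. There is no induction on $s$ and no per-component bookkeeping; the paper is (implicitly) treating $H$ as connected, so a lone robber can walk through every vertex unchallenged. Your meandering through ``one robber per component'' and recursion on $s$ is unnecessary under that reading, and your final summary (2) in the last paragraph matches the paper's argument exactly. So the proposal is correct in outline and matches the paper; just commit to the ``one robber damages all of the cop-free side'' step rather than hedging.
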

\begin{proof}
Suppose the cop starts on $G$. If $\dmg(G; s) > \dmg(G;s-1) + |H|$, then the robbers' strategy will be to all start on $G$ and damage $\dmg(G; s)$ vertices. Otherwise, at least one robber should start on $H$. However, since the cop is not on $H$, one robber in $H$ is enough to damage all $|H|$ vertices. So the remaining $s-1$ robbers should choose to start on $G$ and $\dmg(G;s-1) + |H|$ will be damaged. Therefore, if the cop starts on $G$, $\ell$ vertices are damaged. Similarly, if the cop starts on $H$, $r$ vertices are damaged. Since the cop is playing optimally, the cop will start on whichever graph will yield the least damage. Therefore, $\dmg(G \cup H; s) = \min \{\ell,r\}$.
\end{proof}

Finally, we consider graphs containing cut vertices and determine upper and lower bounds in terms of $s$ and the number of connected components which result from removing a cut vertex.

\begin{prop}
For a graph $G$, if there exists a vertex $v\in V(G)$ such that $G-v$ has $k \geq 1$ non-trivial connected components, then $\dmg(G,s)\geq \min(2k-2,2s-2)$ for all $s\geq 1$.
\end{prop}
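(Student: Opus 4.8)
The plan is to prove this lower bound by describing a robber strategy that guarantees $\min(2k-2,2s-2)$ damaged vertices. If $k=1$ or $s=1$ then $\min(2k-2,2s-2)\le 0$ and there is nothing to prove, so assume $k,s\ge 2$ and set $m=\min(k,s)$; note that $v$ is then a cut vertex. Write $C_1,\dots,C_k$ for the non-trivial components of $G-v$. After the cop selects its starting vertex $u$, I would place exactly one robber in each of $m$ of these components, chosen so that \emph{at most one} of the $m$ chosen components contains $u$: if $u\notin C_1\cup\cdots\cup C_k$ (in particular if $u=v$), pick any $m$ of the $C_i$; if $u\in C_j$ and $m\le k-1$, pick $m$ components other than $C_j$; and if $u\in C_j$ with $m=k$ (so $s\ge k$), pick all $k$ of them. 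Inside each chosen component $C_i$ the robber starts on a vertex $w_i$ that has a neighbor $w_i'\in C_i$, preferring $w_i\not\sim v$ when $C_i$ has such a vertex, and its plan is simply to damage $w_i$ and then $w_i'$, fleeing if the cop threatens it.

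The core of the argument is that the cop can ``ruin'' at most one of the chosen components, where a chosen \emph{cop-free} component $C_i$ is ruined only if the cop occupies a vertex of $C_i$ at the end of round $1$ or at the end of round $2$. Indeed, if the cop is outside $C_i$ at the ends of rounds $1$ and $2$, then the robber is not caught in round $1$ (being caught there would require the cop to step onto $w_i$ from outside $C_i$, possible only from $v$ and only if $w_i\sim v$, which we avoided whenever we could), so $w_i$ is damaged; the robber then moves to $w_i'$, the cop is still outside $C_i$ at the end of round $2$, so the robber is not caught and $w_i'$ is damaged, giving $2$ distinct damaged vertices in $C_i$. Now since $v$ is a cut vertex, passing from one $C_i$ to another forces the cop through $v$ and hence costs an extra move, so the cop cannot occupy a vertex of $C_i$ at the end of round $1$ or $2$ for two distinct indices $i$ (if it is in $C_i$ at the end of round $1$ it cannot reach a different $C_{i'}$ by the end of round $2$, and if it is not in any $C_\ell$ at the end of round $1$ it can enter at most one component during round $2$). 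Consequently at least $m-1$ of the chosen cop-free components each accrue $2$ damaged vertices, which immediately gives $\dmg(G;s)\ge 2(m-1)=\min(2k-2,2s-2)$ in the first two placement cases, where all $m$ chosen components are cop-free.

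The case that needs extra care is $u\in C_j$ with $m=k$, because then $C_j$ is one of the chosen components but is not cop-free, so the claim above only supplies $2$ damaged vertices in at least $k-2$ of the components $C_1,\dots,C_{k-1}$. Here I would bring in the robber planted in $C_j$: because $v$ is a cut vertex, in order to reach and ruin any $C_i$ with $i\le k-1$ the cop must already move out of $C_j$ (to $v$) in round $1$, and such a move cannot catch the robber in $C_j$; that robber then roams the now cop-free $C_j$ for at least two more turns and damages $2$ vertices there, and together with the $2(k-2)$ vertices from the un-ruined $C_i$'s this again yields $2(k-1)$. If on the other hand the cop ruins none of $C_1,\dots,C_{k-1}$, those alone contribute $2(k-1)$. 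I expect the only real friction to be the round-by-round bookkeeping and a couple of degenerate subcases — a component all of whose vertices are adjacent to $v$, where the planted robber is forced to sit one step from $v$ and can be cornered within two rounds, or $C_j\cong K_2$ — but all of these are absorbed by the ``at most one ruined component'' principle, since committing the lone cop to corner one robber is exactly what frees every other chosen component to collect its two damaged vertices.
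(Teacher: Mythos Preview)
Your proof is correct and follows essentially the same strategy as the paper: place one robber in each of $\min(k,s)$ non-trivial components of $G-v$ and use the fact that any path between distinct components passes through $v$ to argue that the cop can prevent at most one of those robbers from damaging two vertices in rounds $1$ and $2$. Your treatment is considerably more detailed than the paper's---you distinguish cases according to the cop's starting position, explicitly handle the situation $u\in C_j$ with $m=k$, and address degenerate components---whereas the paper simply asserts that after a round-$1$ capture in $C_i$ it takes until at least round $3$ to capture in any $C_j$, $j\neq i$; but the underlying idea is the same.
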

\begin{proof}
Let $v \in V(G)$ such that $G-v$ has $k$ non-trivial components. Label the components $C_1,\dots, C_k$. Observe that for vertices $v_i$ and $v_j$ which are in different non-trivial components, $\dist(v_i,v_j)\geq 2$. If $s\geq k$, at least one robber can start in each of the $k$ non-trivial components. If the cop captures a robber in $C_i$ on round 1, it will be at least round 3 before a robber in $C_j$ for $i\not=j$ is captured. Since component $C_j$ is non-trivial, the robber(s) in this component can damage vertices on both rounds 1 and 2. So two or more vertices are damaged in every component except for the component in which the cop captured a robber in round 1. Thus, $\dmg(G;s)\geq 2k-2$. If $s<k$, then each robber starts on a different connected component, say $C_1,\dots, C_s$. Using the same strategy as in the previous case, all the robbers except for the one captured first can damage at least two vertices. Thus, $\dmg(G,s)\geq 2s-2$.
\end{proof}

\begin{prop} \label{damage at most n-d}
If there exists a vertex $v \in V(G)$ such that $G-v$ has $k\geq 1$ connected components, then $\dmg(G; s) \leq \min(n-k+s-2, n-2)$ for all $s\geq 1$. 
\end{prop}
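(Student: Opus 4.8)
The plan is to exhibit a cop strategy that always protects at least $\max(k-s+2, 2)$ vertices from damage, which immediately yields the claimed upper bound $\dmg(G;s) \le \min(n-k+s-2, n-2)$. Let $v$ be the vertex with $G-v$ having connected components $C_1, \dots, C_k$. The key idea is that the cop starts on $v$ in round $0$ and remains there (or reacts) for round $1$: since each robber occupies some single component $C_j$ in round $0$, the cop at $v$ is adjacent to whichever component the cop chooses to step into, but more importantly, $v$ itself is never damaged (the cop sits on it), and on round $1$ the cop can step into one component and capture a robber there if the robbers have clustered, or simply threaten. The cleanest argument: in round $0$ the $s$ robbers occupy vertices in at most $s$ of the components. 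Hence at least $k - s$ components contain no robber in round $0$.

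First I would handle the case $s \ge k$, where the bound reads $\dmg(G;s) \le n-2$; this is already covered by Proposition \ref{prop:damageAtMostNMinus2} (provided $G$ has an edge, which it does since $k \ge 1$ means $G-v$ has a nontrivial-or-trivial component — one needs to check the edge condition, but if $G$ has no edge the statement is vacuous or handled separately). So the substance is the case $s < k$, where we must save $k - s + 2$ vertices. Here is the strategy: the cop starts on $v$. The robbers occupy vertices in components $C_{i_1}, \dots, C_{i_t}$ with $t \le s$. The cop stays on $v$ through round $1$. Then $v$ is not damaged. Moreover, for each component $C_j$ containing no robber at the end of round $0$, a robber can only reach $C_j$ by first passing through $v$ — but the cop occupies $v$, so no robber can enter any empty component without being captured. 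Wait: a robber adjacent to $v$ could move onto $v$ and be captured; it cannot "pass through." So every component empty at round $0$ stays empty (of robbers) forever, hence entirely undamaged. That saves all vertices in those $\ge k-t \ge k-s$ components, plus $v$ itself: at least $k-s+1$ vertices. To get the extra $+1$, note that in round $1$ the cop moves from $v$ into one occupied component $C_{i_1}$, capturing a robber on a vertex $u \in C_{i_1}$ adjacent to $v$ — but we must ensure such a configuration, so instead I would argue: pick any occupied component $C_{i_1}$; the cop moves to a neighbor $u$ of $v$ in $C_{i_1}$. If a robber is on $u$ it is captured; the vertex $u$ is then also protected (it is not damaged since the cop sits there and will remain to re-capture). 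Combined with $v$ and the $k-t$ empty components, this protects at least $(k-t) + 2 \ge k - s + 2$ vertices, giving $\dmg(G;s) \le n - (k-s+2) = n-k+s-2$.

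The main obstacle is making the "$+2$" airtight: I must verify that the cop, after moving from $v$ to a neighbor $u \in C_{i_1}$ in round $1$, can guarantee $u$ is never subsequently damaged while not allowing robbers to leak into the previously empty components. The point is that once the cop leaves $v$, a robber adjacent to $v$ could step onto $v$ and then into an empty component on a later turn. To prevent this I would have the cop toggle between $v$ and $u$ (as in Proposition \ref{prop:damageAtMostNMinus2}): whenever a robber threatens to enter $v$, the cop returns to $v$. A careful case analysis shows the cop can always protect both $v$ and $u$ and keep all round-$0$-empty components robber-free, because any robber path to an empty component must traverse $v$, and the cop can always be at $v$ the moment a robber reaches a neighbor of $v$ that leads to an empty component. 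I would also double-check the degenerate cases: $k=1$ (bound becomes $n-2$, matching Proposition \ref{prop:damageAtMostNMinus2}) and small $s$, and confirm that $n-k+s-2 \le n-2$ exactly when $s \le k$, so the minimum is taken correctly.
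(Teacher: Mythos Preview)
Your approach is essentially the same as the paper's: start the cop on $v$, toggle between $v$ and a neighbor $w$ in an occupied component, thereby protecting $v$, $w$, and every component that was robber-free at the end of round $0$ (since any robber attempting to cross into such a component must step onto $v$ and is captured before it can leave). Your write-up is in fact more careful than the paper's own proof, which fixes a particular robber placement rather than arguing for arbitrary placements; your observation that at most $s$ components are occupied (so at least $k-s$ are empty) is the right way to make the bound uniform over all robber strategies, and your worry about the ``leak'' through $v$ is correctly resolved by the toggle.
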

\begin{proof}
Let $v \in V(G)$ such that $G-v$ has $k$ components. First, assume $s\leq k$ and label $s$ of the components $C_1,\dots,C_s$ and the rest of the components (excluding $v$), $C$. Note that $|C| \geq k-s$. Suppose the cop starts on $v$ and suppose one robber starts on each of the components $C_1,\dots,C_s$. Choose a neighbor of $v \in C_1$ and call this vertex $w$. Let the cop protect the edge $vw$ by moving between $v$ and $w$. This implies that the cop can protect all of the vertices in $C$ in addition to $v$ and $w$. Therefore, the cop can protect at least $k-s+2$ vertices, so $\dmg(G; 2) \leq n-k+s-2$. If $s > k$, then $\dmg(G;s) \leq n-2$ by Proposition \ref{prop:damageAtMostNMinus2}.
\end{proof}

\subsection{A bounds on the $2$-robber damage number}\label{subsec:2generalBounds}

We now turn our focus to the case where $s=2$. In the next result, we consider graphs which contain a vertex of degree at least three and show that in this case, the bound from Proposition \ref{prop:damageAtMostNMinus2} can be improved from $n-2$ to $n-3$.

\begin{prop} \label{prop:maxDegreeThree}
For a graph $G$ on $n$ vertices, if $\Delta(G)\geq 3$, then $\dmg(G; 2) \leq n-3$.
\end{prop}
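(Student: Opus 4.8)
The plan is to exhibit a cop strategy that protects at least three vertices from damage. Let $v$ be a vertex with $\deg(v)=\Delta(G)\geq 3$, and let $u_1,u_2,u_3$ be three distinct neighbors of $v$. I would have the cop start on $v$. The key observation is that with only two robbers, the cop can threaten to capture any robber who steps onto $N[v]\cap\{v,u_1,u_2,u_3\}$, since from $v$ the cop reaches each $u_i$ in one move. The goal is to argue that the cop can guarantee that neither of the two robbers ever safely damages $v$, nor two of the three leaves $u_1,u_2,u_3$ simultaneously, so that at least three of these four vertices survive undamaged.

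First I would set up the round-$0$ analysis: after the cop picks $v$, the robbers pick their positions. If neither robber starts in $\{v,u_1,u_2,u_3\}$, then at the end of round $0$ none of these four vertices is occupied by a robber, and on round $1$ the cop simply stays at $v$; so none of the four gets damaged in round $1$ — but I need a sustained strategy, not just one round. The cleaner approach is a pairing/shadowing argument: the cop stays at $v$ as a "home base" as long as no robber occupies a vertex of $\{u_1,u_2,u_3\}$ at the end of a robber's turn. If some robber $R$ moves onto $u_i$, the cop moves to $u_i$ on the next turn; either $R$ is captured (and then, crucially, the cop is now adjacent to $v$ and to the other two $u_j$'s and can return to $v$, having lost nothing), or $R$ retreated off $u_i$, in which case no damage occurred to $u_i$ and the cop goes back to $v$. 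The subtlety is that two robbers could try to occupy two of the $u_i$'s at once; I would argue that the cop, sitting at $v$, can always capture at least one of them on the next move and deter the rest, so that at most one of $\{u_1,u_2,u_3\}$ is ever damaged, and $v$ is never damaged — giving at least $4-1=3$ protected vertices, hence $\dmg(G;2)\leq n-3$.

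I expect the main obstacle to be handling the interaction between the two robbers carefully: when one robber is at some $u_i$ and the cop is forced to move to $u_i$ to prevent its damage, the cop temporarily leaves $v$, and I must check that the second robber cannot exploit this one-turn absence to damage $v$ or to set up a second leaf for damage in a way that compounds. The resolution should be that damaging $v$ requires a robber to sit on $v$ at the end of a round and survive the next round; since after capturing (or chasing off) the robber at $u_i$ the cop is adjacent to $v$, any robber sitting on $v$ is immediately threatened, so $v$ stays safe. Thus the only vertex among the four that can be sacrificed is one of the $u_i$'s, and only one, since the cop returns to $v$ each time the threat clears. I would also need to note the edge cases where a robber starts on $v$ or on a $u_i$ in round $0$, which are handled the same way since the cop moves first in each subsequent round. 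Packaging these cases into a single invariant — "at the start of every cop turn, the cop is at $v$, or the cop is adjacent to $v$ with the robber it is about to capture being the unique one that has endangered a $u_i$" — should make the argument clean.
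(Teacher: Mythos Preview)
Your high-level plan matches the paper's: place the cop on a vertex $v$ of degree at least $3$, and protect the set $N=\{v,u_1,u_2,u_3\}$ so that at most one of these four is ever damaged. You also correctly flag that the delicate moment is when both robbers land on two of the $u_i$'s in the same round. But your proposed resolution of that moment has a genuine gap.

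The problem is the choice of \emph{which} robber to capture. Your parenthetical claim that after capturing at $u_i$ ``the cop is now adjacent to $v$ and to the other two $u_j$'s'' is false in general: $u_i$ need not be adjacent to $u_j$ or $u_k$. Concretely, suppose the only edge among $\{u_1,u_2,u_3\}$ is $u_1u_2$, and the two robbers simultaneously occupy $u_1$ and $u_3$. If the cop captures at $u_3$, then $u_1$ is damaged and the surviving robber steps from $u_1$ to $u_2$; the cop, now at $u_3$, is not adjacent to $u_2$, so on the next round it cannot reach $u_2$ and $u_2$ is damaged as well. That is two of the four vertices lost, and your bound fails. The correct play here is to capture at $u_1$ instead (then the survivor at $u_3$ has no edge inside $\{u_1,u_2,u_3\}$ to exploit, and the cop at $u_1$ still covers $v$). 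But nothing in your invariant or your ``return to $v$ each time the threat clears'' rule selects the right target; the threat does not clear after the first capture, and the naive choice can lose a second $u_j$.

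This is precisely what the paper's proof spends most of its length on: it splits into cases according to how many edges lie among $\{u_1,u_2,u_3\}$ (none, one, two, or three) and, in each configuration, prescribes which robber the cop should capture so that every neighbour of the survivor inside $N$ is adjacent to the cop's new position. That case analysis is the missing ingredient; your home-base framing is fine as scaffolding, but it cannot replace the targeted choice of capture.
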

\begin{proof}
Consider a graph $G$ with $\Delta(G)\geq 3$ and let $v$ be a vertex with at least 3 neighbors $x, y, z \in V(G)$. Let the cop's strategy be to start on $v$ and try to protect $x, y, z$. This implies that the robbers can move freely on the other vertices, but the cop only reacts when one or both robbers move to $x, y, z$ or $v$. Therefore, we only need to consider the subgraph induced by these 4 vertices, which we call $N$. 


Let the robbers be $R_1$ and $R_2$, and first suppose at most one robber ever moves to a vertex in $N$. If a robber moves to $N$, the cop can clearly capture them, so no vertices in $N$ are damaged. Next, suppose both robbers move to $N$ at some point during the game. If the robbers move to $N$ in non-consecutive rounds, it is clear that the cop can capture the first robber and then return to $v$. When the second robber moves to $N$ the cop can capture them too, thus protecting all $4$ vertices in $N$. Suppose the robbers show up in consecutive rounds. Without loss of generality, let $R_1$ move to $x$. In the next round, the cop will move from $v$ to $x$ to capture $R_1$ and $R_2$ will move to a vertex in $N$. If $R_2$ moved to $v$, then the cop can move back to $v$ and capture in the next round, so no vertices of $N$ are damaged. Otherwise, $R_2$ moved to $y$ or $z$, without loss of generality, say $y$. After capturing $R_1$, the cop will move back to $v$, protecting $x, z$ and $v$ and $R_2$ will damage $y$. No matter where $R_2$ moves next, the cop can still protect $x, z$ and $v$ from becoming damaged. 

Finally, suppose both robbers move to $N$ in the same round. In this case, the cop's strategy depends on the edges between $x, y,$ and $z$. First, suppose there are no edges between $x, y,\text{ or } z$. The cop can follow a similar strategy to the previous one. Without loss of generality, let $R_1$ move to $x$ and let $R_2$ move to $y$. The cop will move to $x$ in the next round to capture $R_1$ and $R_2$ will damage $y$. Next, $R_2$ can either move to $v$ or leave $N$ and the cop will return to $v$. From here it is clear that $R_2$ will not damage another vertex in the next round and if $R_2$ ever re-enters $N$ it is clear that the cop can capture them. Therefore the cop has prevented $v, x,$ and $z$ from being damaged. 

Next, suppose there exists one edge within ${x, y, z}$ and without loss of generality we'll assume the edge is between $x$ and $y$. If $R_1$ and $R_2$ move to $x$ and $y$, then the cop will move to $x$ to capture $R_1$. At this point, $R_2$ has damaged $y$ and can either move to $x$, $v$ (in either case, the cop can capture), or leave $N$. So it is clear that the cop can prevent $v, x,$ and $z$ from being damaged. If one robber moves to a vertex on the edge $xy$ and one robber moves to $z$, the cop will have a different strategy. Suppose $R_1$ moves to $z$ and $R_2$ moves to $y$. The cop will move to $y$, capturing $R_2$, and $R_1$ will damage $z$. From here, the cop can return to $v$ and protect $v, x$ and $y$ the rest of the game. 

Now, suppose there exists two edges within $x, y, z$. Without loss of generality, we'll let the edges be $xz$ and $yz$. First, suppose one robber moves to $z$ and the other moves to $x$ or $y$. We'll let $R_1$ move to $z$ and $R_2$ move to $x$. The cop can move to $z$ to capture $R_1$ and $R_2$ will damage $x$. From here, the cop can protect the vertices neighboring $x$ within $N$. This implies that $R_1$ cannot damage anymore vertices within $N$. Next, suppose neither robber moves to $z$ at first. We'll let $R_1$ move to $x$ and $R_2$ move to $y$. The cop will move to $x$ to capture $R_1$ and $R_2$ will damage $y$. From here, the cop will be able to protect the neighbors of $y$ within $N$ ($z$ and $v$), therefore preventing $R_2$ from damaging anymore vertices within $N$. 

Finally, suppose there exists an edge between each pair of neighbors of $v$ in $N$. This implies that $N$ is $K_4$, so the cop can capture one robber each round, and only one vertex will be damaged within $N$. We have shown that for all cases, the cop can prevent at least 3 vertices from being damaged, therefore $\dmg(G; 2) \leq n-3$. 
\end{proof}

Next, it is natural to ask whether Proposition \ref{prop:maxDegreeThree} can be generalized for all $s$ and $n \geq 1$. The most obvious generalization would be: if $\Delta(G) \geq s+1$, is $\dmg(G; s) \leq n-s-1$? We can use Proposition \ref{prop:damageAtLeastSMinus1} to answer this question negatively in the following way. Note that if $n < 2s$, then $n-s-1 < s-1$. Thus, by Proposition \ref{prop:damageAtLeastSMinus1}, $\dmg(G; s) \geq s-1 > n-s-1$. Therefore, it is possible to have a graph on $n > 2s$ vertices with $\Delta(G) \geq s+1$ such that $\dmg(G; s) > n-s-1$. An example of this is illustrated in Figure \ref{fig:wheelOn5Vertices}.

\begin{figure}[h] \begin{center}
\scalebox{.8}{\includegraphics{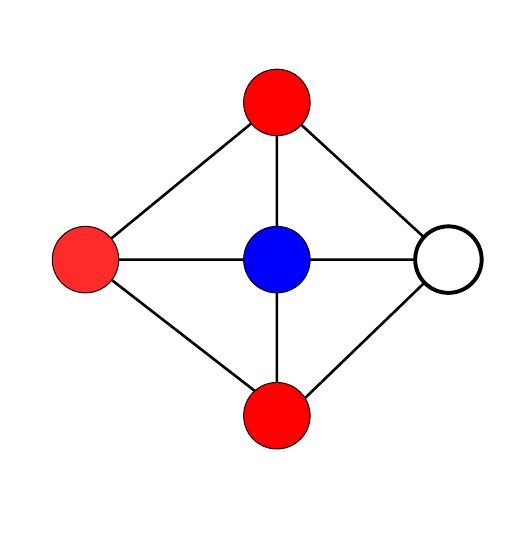}}\\
\caption{The wheel on 4 vertices has $\dmg(W_4; s) > n-s-1$ for $s \in \{3, 4\}$. An initial placement with 1 cop (in blue) and 3 robbers (in red) is shown above.}\label{fig:wheelOn5Vertices}
\end{center}
\end{figure}

We now consider another possible generalization. The following conjecture maintains the upper bound of $n-3$, but generalizes the condition on the maximum degree that is required. 

\begin{conj}\label{conj:maxdeg}
In a graph $G$, if  $\Delta(G)\geq\binom{s}{2}+2$, then $\dmg(G; s) \leq n-3$ for all $s \geq 2$.
\end{conj}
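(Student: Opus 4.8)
The plan is to generalize the strategy from Proposition \ref{prop:maxDegreeThree}: the cop sits at a high-degree vertex $v$ and commits to protecting a well-chosen set of neighbors of $v$, reacting only when robbers enter the induced subgraph on $v$ together with those neighbors. The degree condition $\Delta(G) \ge \binom{s}{2}+2$ is exactly what should allow the cop to pick enough ``escape slots'' among the neighbors of $v$. Concretely, let $v$ have neighbors $x_1,\dots,x_d$ with $d \ge \binom{s}{2}+2$, let $N$ be the subgraph induced by $\{v,x_1,\dots,x_d\}$, and have the cop start at $v$ and only move in response to robbers entering $N$. Since robbers outside $N$ damage vertices the cop was never going to save, it suffices to show the cop can keep at least $3$ vertices of $N$ forever undamaged. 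The first two steps would be: (1) reduce to the case where all $s$ robbers eventually enter $N$ and do so in consecutive rounds, since spreading their arrivals out only helps the cop (as in the $s=2$ argument), and (2) observe that when a robber is at some $x_i$ and the cop is at $v$, the cop can capture that robber next round by moving to $x_i$, and can then return to $v$; so at most one robber is ``active'' against the cop at a time.

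The heart of the argument is a counting/pigeonhole step. When the cop has committed to capturing a robber at $x_i$ and then returning to $v$, the cop is ``blind'' for two rounds, during which the remaining $s-1$ robbers can each damage vertices of $N$. Iterating, the robbers can potentially damage one new vertex of $N$ per robber per ``cop excursion,'' but the key observation is that a vertex $x_j$ that is \emph{not} adjacent to the currently-threatened $x_i$ cannot be reached by a robber sitting at $x_i$ without passing through $v$, where the cop can intercept. So the set of vertices the robbers can damage in a single cop excursion is contained in a neighborhood structure controlled by the edges among $x_1,\dots,x_d$. The idea is: over the whole game the robbers can ``burn through'' at most one neighbor per robber, and the adjacencies they need among the $x_i$'s to chain damage together cost edges — if we want the robbers to damage all but $2$ of the $d$ neighbors, we would need the induced graph on $\{x_1,\dots,x_d\}$ to contain a fairly dense configuration, and $s$ robbers can only create $\binom{s}{2}$ ``pairwise handoffs.'' So if $d \ge \binom{s}{2}+2$, there must remain at least $3$ vertices of $N$ (two neighbors plus $v$) the cop can permanently protect. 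Making this precise would likely go through a potential-function or amortized argument: assign to each robber a ``budget,'' track which vertices of $N$ are still safe, and show each robber move either fails to damage a new vertex of $N$ or consumes part of the $\binom{s}{2}$ total.

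The main obstacle I expect is exactly formalizing the combinatorial bound in the previous paragraph: the informal claim ``$s$ robbers can only exploit $\binom{s}{2}$ adjacencies among the neighbors of $v$'' needs a careful model of what a robber can do during the two rounds the cop is blind, including the possibility that robbers loiter outside $N$ and re-enter, that multiple robbers sit on the same $x_i$, and that the robbers coordinate their arrivals to force the cop to abandon one protected neighbor after another. A clean way to handle this may be an inductive argument on $s$: when the cop captures the first robber at $x_i$, the remaining $s-1$ robbers, together with the modified graph $N - x_i$ (and the fact that $x_i$'s neighbors in $N$ are now the only new ``entry points'' for those robbers), should reduce to the $(s-1)$-robber case, with the degree drop from $d$ to roughly $d - (s-1)$ matching the drop from $\binom{s}{2}+2$ to $\binom{s-1}{2}+2$ since $\binom{s}{2} - \binom{s-1}{2} = s-1$. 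Getting the bookkeeping of this induction exactly right — in particular verifying that the cop really can force the interaction to localize to $N - x_i$ — is where the real work lies, and it is also where the hypothesis $\Delta(G) \ge \binom{s}{2}+2$ gets used tightly. (One should also double-check small cases and the $s=2$ base case against Proposition \ref{prop:maxDegreeThree}, and confirm the conjecture is not contradicted by the wheel examples, where $\Delta$ is too small.)
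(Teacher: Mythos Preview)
The statement you are attempting to prove is labeled a \emph{conjecture} in the paper, and the paper gives no proof of it; it is explicitly left open (the concluding remarks describe proving Conjecture~\ref{conj:maxdeg} as a possible route to future results). So there is no ``paper's own proof'' to compare your proposal against.

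As a proof sketch, your plan is reasonable in spirit and correctly identifies the right template: imitate Proposition~\ref{prop:maxDegreeThree} by having the cop sit at a vertex $v$ of degree $d\ge\binom{s}{2}+2$ and only react inside $N=G[\{v,x_1,\dots,x_d\}]$. But the central step---``$s$ robbers can only exploit $\binom{s}{2}$ adjacencies among the neighbors of $v$''---is not an argument yet, and you acknowledge this. Two concrete issues to worry about. First, your inductive bookkeeping is off in a subtle way: when the cop leaves $v$ to capture a robber at $x_1$ and then returns, that is \emph{two} rounds, and in each of those rounds the surviving $s-1$ robbers can potentially damage a new neighbor of $v$ (provided the right edges exist among the $x_i$). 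So a single excursion can cost up to $2(s-1)$ neighbors, not $s-1$, and the clean identity $\binom{s}{2}-\binom{s-1}{2}=s-1$ does not by itself close the induction. Second, the reduction ``localize to $N-x_1$'' is delicate because the cop is at $x_1$, not at $v$, immediately after the first capture; whether the cop can get back to $v$ without giving up another protected vertex depends on the edges among the $x_i$, which is exactly the case split that makes the $s=2$ proof in Proposition~\ref{prop:maxDegreeThree} long. The tradeoff you will need to exploit is that more edges among the $x_i$ let the robbers chain damage but also let the cop shortcut between captures without returning to $v$, while fewer edges force the cop through $v$ but also strand the robbers. Turning that tension into a uniform bound of $\binom{s}{2}$ damaged neighbors is the real content of the conjecture, and nothing in your outline yet pins it down.
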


\section{The $s$-robber damage number of graph families}\label{sec:srobberFamilies}


In this section, we determine the $s$-robber damage number for certain graph families. We begin by considering the empty graph $\overline{K_n}$ and the complete graph $K_n$ on $n$ vertices.

\begin{prop}\label{prop:Empty}
For $n\geq 1$, $\dmg (\overline{K_n}; s) = \min\{s, n-1\}$ for all $s\geq 1$. 
\end{prop}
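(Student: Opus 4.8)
The plan is to use the fact that $\overline{K_n}$ has no edges, so \emph{no player can ever move}: the entire game is decided by the round-$0$ placements, and a vertex $w$ becomes damaged if and only if some robber starts on $w$ and $w$ is not the cop's starting vertex (in which case that robber is captured in round $1$ and $w$ is spared, while every other robber-occupied vertex is damaged in round $1$ and forever after). So the proof reduces to a counting argument on initial positions.

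For the upper bound, I would let the cop start on an arbitrary vertex $v$. Since nothing moves, the robbers occupy at most $s$ vertices after round $0$, and the cop prevents damage only to $v$ itself. Hence at most $s$ vertices are damaged, and at most $n-1$ vertices (everything but $v$) are damaged, giving $\dmg(\overline{K_n};s)\le \min\{s,n-1\}$. (Note Proposition \ref{prop:damageAtMostNMinus2} does not apply since $\overline{K_n}$ has no edge, so this argument is needed.)

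For the lower bound, I would fix an arbitrary cop start $v$ and give the robber strategy. If $s\le n-1$, place the robbers on $s$ distinct vertices of $V(\overline{K_n})\setminus\{v\}$, which is possible because $|V(\overline{K_n})\setminus\{v\}|=n-1\ge s$; all $s$ of these vertices are damaged. If $s\ge n$, place (at least) one robber on each of the $n-1$ vertices of $V(\overline{K_n})\setminus\{v\}$; all $n-1$ of these are damaged. In both cases at least $\min\{s,n-1\}$ vertices are damaged no matter where the cop starts, so $\dmg(\overline{K_n};s)\ge\min\{s,n-1\}$.

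Combining the two bounds yields the claimed equality. The argument is essentially routine; the only point to keep in mind is the degenerate case $n=1$, where the cop occupies the unique vertex, no robber can occupy any other vertex, and $\dmg=0=\min\{s,0\}$, consistent with the formula. There is no real obstacle here — the key observation is simply that the absence of edges freezes all dynamics, reducing everything to how many vertices the robbers can cover in round $0$ while avoiding the cop.
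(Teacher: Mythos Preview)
Your proof is correct and follows essentially the same approach as the paper: reduce to the round-$0$ placements using the fact that no one can move, then count how many non-cop vertices the robbers can cover. The paper's proof is terser (it states the exact damage in each case rather than splitting into separate upper and lower bounds), but your more explicit treatment of the upper bound and the $n=1$ edge case is a harmless elaboration of the same idea.
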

\begin{proof}
Let $1 \leq s \leq n-1$ and suppose the cop starts on vertex $v \in V(G)$. The robbers can each start on distinct vertices in $V(G) \setminus \{v\}$ and the cop can only protect $v$. Thus, $s$ vertices are damaged. If $s > n-1$, let the $s$ robbers start on the $n-1$ vertices not occupied by the cop. Therefore, $n-1$ vertices are damaged.
\end{proof}

\begin{prop}
For $n \geq 4$, $\dmg(K_n; s) = \min\{\frac{s(s-1)}{2}, n-2\}$ for all $s\geq 1$.
\end{prop}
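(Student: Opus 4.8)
The plan is to prove matching upper and lower bounds. For the lower bound, I would argue that no matter where the cop starts, the robbers can guarantee at least $\min\{\binom{s}{2}, n-2\}$ damaged vertices. Place the cop on a vertex $v$, and distribute the $s$ robbers on distinct vertices of $V(K_n)\setminus\{v\}$ (possible since $s$ can be assumed at most $n-1$ in the relevant regime; if $s\ge n$ the corollary after Proposition \ref{prop:damageAtMostNMinus2} already gives equality with $n-2$, and one checks $\binom{s}{2}\ge n-2$ there). On round $1$ the cop can capture at most one robber; each of the remaining $s-1$ robbers damages its current vertex. More generally, in $K_n$ a robber is safe exactly when the cop is not adjacent to (equivalently, not on) its vertex, so on each round the cop neutralizes at most one robber, and the robbers can keep moving to fresh, undamaged, uncop-occupied vertices. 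The key counting point: after the cop captures the first robber (say end of round $1$), $s-1$ robbers remain and they have already damaged $s-1$ vertices; after the cop captures the second (at best two rounds later, since it must travel back), more damage accrues, and so on. I would make this precise by showing the $i$-th captured robber (for $i=1,\dots,s-1$, the $s$-th robber needing no escape) can force roughly $i$ damaged vertices before capture, summing to $1+2+\cdots+(s-1)=\binom{s}{2}$, capped at $n-2$ once vertices run out (the cop's start $v$ and the vertex of the last robber captured are the only two that can stay clean).

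For the upper bound, I would give a cop strategy limiting damage to $\min\{\binom{s}{2}, n-2\}$. Since $\Delta(K_n)=n-1\ge 3$ for $n\ge 4$, Proposition \ref{prop:maxDegreeThree} is not quite enough (it only handles $s=2$), so I need a direct argument. Have the cop start on any vertex $v$; in $K_n$ the cop can capture any robber that is not on $v$ in a single move. The cop's strategy is greedy: on each round, move to capture one robber (if any robber is not on $v$), preferring... actually in $K_n$ all robbers off $v$ are equivalent, so simply capture any one of them. After capturing, the cop sits on the vertex just vacated by the captured robber (that vertex never becomes damaged, since its robber is gone) and repeats. Track the process: at the start of round $1$ there are $s$ robbers; the cop removes one per round. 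The robbers remaining after round $i$ number $s-i$ (for the first $s-1$ rounds), and on round $i$ at most $s-i$ new vertices get damaged — but we must be careful that already-damaged vertices can be re-used, so the bound is subtler.

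Here is the cleaner accounting I would use for the upper bound. Order the robbers by capture time; robber $j$ is captured on round $t_j$ with $t_1<t_2<\cdots$. In $K_n$, $t_j\ge j$ is immediate (one capture per round), and in fact one can arrange $t_j=j$ for $j=1,\dots,s-1$. Robber $j$ damages at most one vertex per round it survives, so at most $t_j-1\le j-1$... no: robber $j$ survives rounds $1,\dots,t_j-1$ and is captured on round $t_j$, and on each surviving round it occupies some vertex that gets damaged. But distinct robbers may damage the same vertex on different rounds. The honest bound is: total damage $\le$ number of distinct vertices ever occupied by an uncaught robber at a round's end. Robber $s$ is never caught and occupies $\ge 1$ vertex; but with the cop cycling through vertices, I claim the cop can keep $v$ and one other vertex permanently safe. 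So $\dmg\le n-2$. For the $\binom{s}{2}$ bound when $n$ is large: robber $j$ contributes at most $j-1$ genuinely-new damaged vertices (an induction on $j$, using that when robber $j$ is the only one left it damages nothing new beyond what earlier robbers... ). Summing $\sum_{j=1}^{s}(j-1)=\binom{s}{2}$. I expect the main obstacle to be exactly this upper-bound bookkeeping: ruling out that late-surviving robbers keep finding fresh undamaged vertices faster than the cop can corral them. I would resolve it by showing the cop, after the $k$-th capture, can confine the surviving $s-k$ robbers to a set of size at most (something like) $\binom{s-k}{2}+2$ by repeatedly parking on vacated vertices and toggling, mirroring the toggle-edge idea in Propositions \ref{prop:damageAtMostNMinus2} and \ref{prop:maxDegreeThree}.
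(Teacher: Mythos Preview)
Your overall strategy matches the paper's: greedy capture by the cop for the upper bound, robbers spreading to fresh undamaged vertices for the lower bound. But you talk yourself out of the correct upper-bound argument at exactly the moment you have it.

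You write: ``on round $i$ at most $s-i$ new vertices get damaged --- but we must be careful that already-damaged vertices can be re-used, so the bound is subtler.'' This caution is backwards. If a surviving robber sits on an already-damaged vertex, that contributes \emph{zero} new damage; re-use can only help the cop. So the per-round count you already stated is the whole argument: after the cop's move in round $i$ there are $s-i$ robbers left, hence at most $s-i$ vertices become (newly) damaged that round, and summing over $i=1,\dots,s$ gives $\sum_{i=1}^{s}(s-i)=\binom{s}{2}$. The per-robber accounting you then attempt (robber $j$ contributes at most $j-1$ new vertices), and the proposed confinement/toggle argument, are unnecessary detours. The paper does precisely the simple per-round sum and stops.

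On the lower bound, your instinct is right but the phrasing ``the $i$-th captured robber can force roughly $i$ damaged vertices'' is awkward and not quite the accounting you want. The clean version (and what the paper does) is again per round: after the cop captures in round $i$, the remaining $s-i$ robbers each move to a distinct undamaged vertex not occupied by the cop, adding $s-i$ new damaged vertices. You should check there are enough such vertices available at each step when $\binom{s}{2}\le n-2$; this is routine. For the regime $\binom{s}{2}>n-2$, the paper argues that at some round the number of undamaged vertices drops below the number of surviving robbers, and the robbers can then flood all but two of them in one move, giving $n-2$.
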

\begin{proof}
First, note that by Proposition \ref{prop:damageAtMostNMinus2}, $\dmg(K_n; s) \leq n-2$. Next, we assume $\frac{s(s-1)}{2}\leq n-2$ and show that there exists a cop strategy such that $\dmg(K_n; s) \leq \min\{\frac{s(s-1)}{2}\}$. Since every vertex in $K_n$ is a dominating vertex, the cop can capture a new robber each round until all of the robbers have been caught. Since $\binom{s}{2} \leq n-2$, in the first round, $s-1$ vertices will be damaged and as the cop continues to capture robbers, $s-2, s-3, ...$ vertices will be damaged each round. Therefore, if there are enough vertices in the graph, the robbers can damage at most $(s-1) + (s-2) + ... = {s \choose 2} = \frac{s(s-1)}{2}$ vertices. Thus, the cop should use this strategy when $\frac{s(s-1)}{2} \leq n-2$ and use the strategy from Proposition \ref{prop:damageAtMostNMinus2} otherwise. This implies that $\dmg(K_n; s) \leq \min\{\frac{s(s-1)}{2}, n-2\}$.

Next, we will give a strategy for the robbers such that no matter what the cop does, the robbers can damage at least $\min\{\frac{s(s-1)}{2}, n-2\}$ vertices. Let the robbers start on as many vertices as possible, but not the vertex that the cop starts on. If ${s \choose 2} \leq n-2$, all of the robbers can start on distinct vertices and it is clear that the cop can only capture one robber in the first round. This implies that after the first round, $s-1$ vertices are damaged and $s-1$ robbers remain uncaught. Suppose the robbers try to damage as many vertices as possible by moving to different undamaged vertices each round. Thus, the robbers can damage $(s-1) + (s-2) +... = \frac{s(s-1)}{2}$ vertices, no matter what the cop does. 

Now, suppose ${s \choose 2} > n-2$. This implies that at some point in the game, the number of undamaged vertices, $k$, is less than the number of remaining robbers. Assuming the cop has been playing optimally up to this point, the cop will be occupying one of these undamaged vertices. 
Therefore, by moving to the undamaged vertices, the robbers can damage at least $k-2$ vertices in the next round. This leaves 2 vertices undamaged, which implies that the robbers can damage at least $n-2$ vertices. Therefore, we have established that $\dmg(K_n; s) = \min \{\frac{s(s-1)}{2}, n-2\}$.
\end{proof}


We next consider the path graph on $n$ vertices, $P_n$ and show that for any number of robbers $s$, the $s$-robber damage number is $n-2$.

\begin{thm}\label{thm:path}
For $n, s \geq 2$, $\dmg(P_n; s) = n-2$.
\end{thm}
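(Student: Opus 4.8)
The statement splits into $\dmg(P_n;s)\le n-2$ and $\dmg(P_n;s)\ge n-2$, and essentially all the content is in the case $s=2$. The upper bound is free: $P_n$ has an edge for $n\ge2$, so Proposition~\ref{prop:damageAtMostNMinus2} gives $\dmg(P_n;s)\le n-2$ for all $s\ge1$. For the lower bound it is enough to treat $s=2$: for $2\le s\le n-1$ the robbers can run an optimal two-robber strategy with two of their members and park the remaining $s-2$ on distinct unused vertices, and extra robbers never help the cop (it either ignores them, damaging their vertices, or chases them, diverting itself from the two active robbers), so $\dmg(P_n;s)\ge\dmg(P_n;2)$; for $s\ge n$ the equality is the corollary following Proposition~\ref{prop:damageAtMostNMinus2}.

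Now take $s=2$, write $P_n=v_1v_2\cdots v_n$, and suppose the cop starts at $v_c$; by the path's reflection symmetry assume $c\le\lceil n/2\rceil$, so the right arm $\{v_{c+1},\dots,v_n\}$ is at least as long as the left. The plan is to exhibit a robber response that always damages every vertex but at most two. I would place one robber just inside each arm of the cop --- at $v_{c-2}$ and $v_{c+2}$ when $3\le c\le n-2$ (so that neither can be captured on the cop's first move), and adjust the placement when the left arm is short, putting a ``sacrificial'' robber at $v_1$ whose loss the right-arm robber can compensate for by damaging one extra vertex near $v_c$. Each robber then patrols its own arm by the rule: if (after the cop's move) the cop is at distance $\ge 3$, advance one step toward the cop's position and, if the cop has vacated it, keep following, damaging each vertex left behind; if the cop is within distance $2$, retreat one step toward the robber's own end of the path, and if that end is reached, wait there. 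The structural claim is that under this play the two robbers damage all vertices except the (at most) two on which the cop ends up ``camping,'' which forces $\dmg(P_n;2)\ge n-2$; together with the upper bound this is exactly $n-2$.

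The proof of the claim is a case analysis on the cop's trajectory. In round $1$ the cop can move toward at most one of the two robbers, so the other robber's starting vertex is damaged and that robber is free. From then on, if the cop ever commits to chasing one robber along an arm, that robber is driven to the wall and damages every vertex it passes en route, while the cop's long absence lets the second robber sweep its entire arm and then trail the cop back, damaging everything but at most two vertices near the cop's turnaround --- at least $n-2$ in all. If the cop never commits (the only way to hold the damage to $n-2$), then neither robber is captured, the cop's path reduces to toggling on a block of at most two vertices, and the robbers damage precisely the complement of that block. I expect the hard part to be the bookkeeping here: pinning down the patrol rule so that a robber provably never gets captured --- the subtle points are the ``retreat within distance $2$'' clause and the behaviour at the walls $v_1,v_n$ --- and verifying that two coordinating robbers really cannot be kept off a third vertex, i.e.\ that on a path a single cop can shield at most two vertices even by patrolling a larger region or by splitting attention between the robbers. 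The finitely many small configurations ($n\le 4$, or $c\in\{1,2\}$) should be checked directly.
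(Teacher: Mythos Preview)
Your plan is correct and follows essentially the same route as the paper: the upper bound via Proposition~\ref{prop:damageAtMostNMinus2}, the reduction to $s=2$ by having surplus robbers mimic the two active ones, the placement of the two robbers at distance two on either side of the cop (with ad hoc handling when an arm is too short), and a follow/retreat protocol analyzed by cases on the cop's trajectory. The paper's proof carries out precisely the ``bookkeeping'' you flag as the hard part, with the same case split on whether the cop moves in round~$1$ and, if not, on whether the cop ever commits to one side.
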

\begin{proof}
By Proposition \ref{prop:damageAtMostNMinus2}, we have that $\dmg(P_n; s) \leq n-2$. To show $\dmg(P_n; s) \geq n-2$, we argue that for any cop strategy, the robbers are able to damage $n-2$ vertices.

For $s> 2$, the robbers can form two non-empty groups such that every robber in each group acts as a single robber. Thus, it is sufficient to prove the result for $s=2$. Let the two robbers be called $R_1$ and $R_2$.

If $n=2$, it is clear that the cop can protect the two vertices and therefore the robbers are not able to damage any vertices. So, $n-2 = 2-2 = 0$ vertices can be damaged. Next, let $n > 2$. If the cop starts on a leaf, the robbers can start on the vertex which is distance two away from this leaf. On each round, the robbers can move towards the other end of the path and will not be captured until they reach the end. Therefore, the robbers can damage $n-2$ vertices.

Now, suppose the cop starts on a neighbor of a leaf. If $n=3$, the only neighbor of a leaf is the middle vertex and a robber can start on each leaf. Since the cop can only capture one of the robbers in the first round, it is clear that at least one vertex will be damaged and $n-2 = 3-2 =1$. If $n > 3$, place $R_1$ on the leaf neighboring the cop and place $R_2$ on the vertex of distance two from the cop. If the cop passes during the first round, $R_1$ will damage the leaf and $R_2$ can move to the other end of the path, damaging $n-3$ vertices. Therefore, $n-3+1 = n-2$ vertices are damaged. If the cop captures $R_1$ in the first round, then $R_2$ can move towards the cop in the first round and then move back towards the other end of the path, damaging $n-2$ vertices. If the cop moves towards $R_2$ in the first round, $R_2$ will move to the other end of the path, damaging $n-3$ vertices on the way. Since $R_1$ will at least damage one vertex (the leaf), at least $n-3+1 = n-2$ vertices are damaged.

Finally, suppose the cop starts on a vertex which is distance at least two from both leaves. It is clear in this case that $n\geq 5$. Consider the cop's initial vertex and the two vertices to its left and right. We label these vertices $v_1,...,v_5$, left to right, so the cop starts on $v_3$. Let $R_1$ start on $v_1$ and $R_2$ start on $v_5$. Let $x$ and $y$ be the number of vertices in $P_n$ to the left of $v_1$ and to the right of $v_5$, respectively. Without loss of generality, suppose $x \leq y$ (note that $x$ or $y$ could be zero).

If the cop moves to $v_2$ in the first round, then the robbers will both move to the left as well and $R_2$ will damage $v_4$. Similarly, if the cop moves to $v_4$ in the first round, then the robbers will both move to the right as well and $R_1$ will damage $v_2$. After this happens, $R_1$ can move left during every turn and $R_2$ can move right during every turn (until they reach a leaf), damaging each vertex on their path. It is clear that $v_3$ and the vertex the cop moves to in the first round are the only undamaged vertices. Therefore, $n-2$ vertices will be damaged.

If the cop doesn't move first, then the robbers must move first (otherwise, if neither player moves, only two vertices are damaged). It is obvious that $R_1$ can damage $x+1$ vertices without being caught. As $R_1$ is damaging those vertices, $R_2$ can stay exactly two vertices to the right of the cop, whenever possible.

If $R_2$ is ever captured, this strategy ensures capture will occur on the right leaf. Capturing $R_2$ on that vertex will take the cop at least $2+y$ rounds. In order to prevent damage to all of the vertices, the cop must then move back to $v_3$. Note that the cop requires at least $2(2+y) = 4 + 2y$ rounds to capture $R_2$ and return to $v_3$. However, in at most $2x+1$ rounds, $R_1$ can move left, damaging the left side of the path, and then return to $v_2$. Since $x \leq y$, it's clear that $2x + 1 < 2y + 4$, which means $R_1$ can damage $v_2$. Overall, $R_1$ can damage at least $x+2$ vertices and $R_2$ can damage $y+1$ vertices and therefore, at least $n-2$ vertices will be damaged.  

Otherwise, assume that $R_2$ is not captured. If the cop ever moves to the left of $v_3$ towards $R_1$, then $R_2$ can damage $v_4$, $v_5$ and the $y$ vertices to the right $v_5$ without being caught. It is clear that $v_2$ and $v_3$ are the only undamaged vertices, so $n-2$ vertices can be damaged. Next, suppose the cop never moves to the left of $v_3$. If the cop is to the right of $v_3$ when $R_1$ returns to $v_1$, it's clear that $R_1$ can damage $v_2$. At this point, $R_2$ can damage any remaining vertices on the right side of the path, so $x+2+y+1=n-2$ vertices can be damaged. If the cop is on $v_3$ when $R_1$ returns to $v_1$, $R_2$ is on $v_5$. If the cop moves to either $v_2$ or $v_4$, then the robbers can act as if the cop did this in round one, and damage $n-2$ vertices as in that case. If the cop passes, $R_1$ can move to $v_2$ and $R_2$ can stay on $v_5$. If the cop doesn't capture $R_1$, then $v_2$ will be damaged and $R_2$ can damage $v_5$ and $y$ more vertices without being caught, so $n-2$ vertices are damaged. On the other hand, if the cop moves to $v_2$ to capture $R_1$, then $R_2$ can move to $v_4$ and then move back down the right end of the path without getting caught. Therefore $n-2$ vertices are damaged.

We have shown that at least $n-2$ vertices are damaged regardless of what strategy the cop uses, so $\dmg(P_n; s) = n-2$. 
\end{proof}

Next, we show that $n-2$ is also the $s$-robber damage number for the cycle graph $C_n$ on $n$ vertices, employing a similar technique to Theorem \ref{thm:path}.

\begin{thm}\label{thm:cycle}
For $n \geq 3$ and $s \geq 2, \dmg(C_n; s) = n-2$.
\end{thm}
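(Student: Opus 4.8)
The plan is to mirror the structure of Theorem~\ref{thm:path}. The upper bound $\dmg(C_n;s)\le n-2$ is immediate from Proposition~\ref{prop:damageAtMostNMinus2}, since $C_n$ has at least one edge for $n\ge 3$. As in the path case, for $s>2$ the robbers split into two nonempty groups, each behaving as a single robber, so it suffices to handle $s=2$ with robbers $R_1,R_2$. The small case $n=3$ is handled directly: the cop starts on one vertex, a robber starts on each of the other two, and the cop captures at most one in round~1, so $1=n-2$ vertices are damaged. For $n\ge 4$, I would exploit the vertex-transitivity of $C_n$: without loss of generality the cop starts on a fixed vertex, call it $c$, and we may label the vertices $v_1,\dots,v_n$ cyclically with the cop on some $v_j$; by symmetry place $v_0:=c$ and the two ``sides'' of the cycle symmetrically.

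The core of the argument is a robber strategy placing $R_1$ and $R_2$ at distance $2$ from the cop on the two arcs leaving $c$ (i.e., two steps clockwise and two steps counterclockwise). Because the cop sits on a cycle, on round~1 the cop can move at most one step in one direction; whichever direction the cop commits to, the robber on the \emph{other} side is now at distance $3$ from the cop and can run around the cycle in that direction, damaging every vertex it passes, never getting caught. Meanwhile the robber the cop chases can, before being overtaken, damage at least one vertex (the vertex it started on, and typically more by first stepping toward the cop then reversing, exactly the ``stall then reverse'' trick used in the path proof). One then checks that the only two vertices that escape damage are $c$ itself and the single vertex the cop first moves to. The case where the cop passes on round~1 is handled as in the path proof: then the robbers must move (a pass-pass position damages only two vertices, which is fine), each robber advances one step around its arc, and a timing/parity comparison — the cop needs roughly twice as many rounds to chase down one robber on a length-$n$ cycle and return as the other robber needs to sweep its arc and come back — shows $n-2$ vertices get damaged. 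I would formalize the timing bound by noting that to capture $R_2$ and then return to a blocking position near $c$ costs the cop on the order of $2n$ moves, far more than $R_1$ needs to clear its side.

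The main obstacle, and the place I expect to spend the most care, is the bookkeeping of exactly which two vertices remain undamaged across all cop responses — in particular making sure the two robbers' sweeps ``meet'' correctly on the far side of the cycle so that no third undamaged vertex is accidentally created, and handling the interaction when the cop oscillates near $c$ rather than committing to a chase (the analogue of the ``cop never moves left of $v_3$'' subcase in Theorem~\ref{thm:path}). A clean way to organize this: fix the cop's first move (or pass); argue the chased robber damages its whole arc plus the two ``inner'' vertices on its side except $c$ and the cop's first landing spot; argue the free robber damages its whole arc plus its two inner vertices; observe the two arcs together with the four inner vertices and $c$ partition $V(C_n)$, so the undamaged set has size exactly $2$. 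I would also double-check the edge cases $n=4,5$ by hand, since there the ``arcs'' are very short and the generic timing inequalities degenerate, just as $n=3,4,5$ required separate attention in the path theorem.
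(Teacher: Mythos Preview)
Your outline is essentially the paper's own proof: the upper bound from Proposition~\ref{prop:damageAtMostNMinus2}, the reduction to $s=2$, the direct handling of small $n$, the placement of $R_1,R_2$ at distance~$2$ on either side of the cop, the case split on the cop's first move, and the ``one robber shadows the cop at distance~$2$ while the other sweeps its arc'' device for the pass case all match. The only points to flag are that $n=4$ genuinely needs a different initial placement (distance~$2$ on both sides collapses to a single vertex), which the paper handles separately, and that the paper's timing bound in the pass case is sharper than your ``order $2n$'' estimate --- it compares $\lceil (n-5)/2\rceil+2$ rounds for the cop to reach the far arc against $\lceil (n-5)/2\rceil+1$ rounds for the sweeping robber to finish.
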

\begin{proof}
By Proposition \ref{prop:damageAtMostNMinus2}, we have that $\dmg(C_n; s) \leq n-2$. To show $\dmg(C_n; s) \geq n-2$, we argue that for any cop strategy, the robbers are able to damage $n-2$ vertices.

As in the proof of Theorem \ref{thm:path}, for $s> 2$, the robbers can form two non-empty groups such that every robber in each group acts as a single robber. Thus, it sufficient to prove the result for $s=2$. Let the two robbers be called $R_1$ and $R_2$.

If $n=3$, the robbers can start on the two vertices that the cop does not start on. In the first round, the cop can only capture one robber therefore one vertex will be damaged. Thus, damage is at least one. If $n = 4$, let $R_1$ start next to the cop and let $R_2$ start on the vertex of distance two from the cop. In the first round, the cop will capture $R_1$. Then $R_2$ can move to its neighbor that will be a distance of two away from the cop. This implies that $R_2$ can damage its starting vertex and a second vertex. Thus, at least two vertices are damaged.

If $n\geq 5$, suppose the cop starts on an arbitrary vertex $v_3$ and label the four closest vertices to $v_3$ as $v_1, v_2, v_4, v_5$, clockwise. Let the robbers, $R_1$ and $R_2$, start on vertices $v_1$ and $v_5$, respectively. Suppose there are $z=n-5$ vertices left unlabeled (note it is possible that  $z=0$). Split up the $z$ vertices into two sets, $X$ and $Y$, as follows. Let $X$ be the set of $\lceil \frac{n-5}{2} \rceil$ vertices, starting from the unlabeled neighbor of $v_1$ and moving counterclockwise. Similarly, let $Y$ be the set of $\lceil \frac{n-5}{2} \rceil$ vertices, starting from the unlabeled neighbor of $v_5$ and moving clockwise. Note that if $n$ is even, $X$ and $Y$ will both contain the vertex which is farthest away from $v_3$. 

Suppose the cop moves to $v_2$ in the first round. Then, $R_1$ will move in the same direction away from the cop and $R_2$ will move to $v_4$. At this point, $R_1$ and $R_2$ are guaranteed to damage $n-2$ vertices. This is because no matter what the cop does, $R_1$ and $R_2$ can move towards each other (and away from the cop), and damage the $z$ additional vertices without being caught. This implies that $z$ vertices plus $v_1, v_4,\text{ and } v_5$ are damaged, so $n-5 + 3 = n-2$ vertices are damaged. If the cop moves to $v_4$ in the first round, then the robbers can simply follow the same strategy with their roles reversed. 

If the cop passes on the first round, we can use a technique similar to the one in the proof of Theorem \ref{thm:path}. Let $R_1$ move counterclockwise, damaging the vertices in $X$, while $R_2$ stays a distance of two away from the cop. Using this strategy, it is clear that $R_2$ will not be captured. If the cop ever moves from $v_3$ to $v_2$, then we know that $R_2$ can damage $v_4$. Afterward, $R_2$ can move clockwise until the robbers have together damaged all remaining vertices. In this case, the robbers damage at least $z+3=n-2$ vertices.

If the cop never moves from $v_3$ to $v_2$, then the cop could only move to a vertex in $X$ by moving clockwise through $Y$. During this process, $R_2$ will stay a distance of two away from the cop and damage all of the vertices in $Y$, as well as $v_5$. It will take at least $\lceil \frac{n-5}{2} \rceil + 2$ rounds for the cop to enter $X$. However, $R_1$ can damage $v_1$ and all of the vertices in $X$ in $\lceil \frac{n-5}{2} \rceil + 1$ rounds. Then, $R_1$ can move clockwise back to $v_2$ without being captured, since the cop will always be at least distance two away. Thus, $n-2$ vertices are damaged.

If the cop never enters $X$, the cop will only ever move between the vertices in $Y \cup \{v_3, v_4, v_5\}$. This means that $R_1$ can damage $v_1$, $v_2$, and the vertices in $X$, since the cop will never enter these vertices. Meanwhile, $R_2$ can start moving clockwise on every turn while remaining at least distance two from the cop at all times. Using this strategy, $R_2$ can damage $v_5$ and the vertices in $Y$. Therefore, $n-2$ vertices are damaged.

We have shown that the robbers can damage at least $n-2$ vertices no matter what strategy the cop uses, so $\dmg(C_n; s) = n-2$. 
\end{proof}

Finally, we show that a similar technique to Theorem \ref{thm:path} can be used to compute the $s$-robber damage number of a spider graph.

\begin{thm}\label{thm:star}
Suppose $G$ is a spider graph with $\ell \geq 3$ legs of lengths $k_1\geq k_2\geq \dots\geq k_{\ell}$. If $2 \leq s\leq \ell$, $\displaystyle \dmg(G; s) =\left(\sum_{i=1}^s k_i\right) -1$ and if $s > \ell$, $\dmg(G; s) =n-2$ . 
\end{thm}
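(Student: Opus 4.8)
The plan is to handle $2\le s\le\ell$ directly and then deduce the regime $s>\ell$ from the boundary case $s=\ell$. Write $c$ for the centre of the spider and, on the $i$-th leg, label the vertices $u^{(i)}_1,\dots,u^{(i)}_{k_i}$ in order of increasing distance from $c$, so $u^{(i)}_1$ is the neighbour of $c$ and $u^{(i)}_{k_i}$ is the leaf; then $n=1+\sum_{i=1}^{\ell}k_i$, so $\sum_{i=1}^{\ell}k_i-1=n-2$ and the two claimed formulas agree at $s=\ell$. For $s>\ell$ I would partition the robbers into $\ell$ nonempty teams, one per leg, and let each team move in unison as a single robber, exactly as in the proofs of Theorems~\ref{thm:path} and~\ref{thm:cycle}; this shows $\dmg(G;s)\ge\dmg(G;\ell)=n-2$, which together with the bound $\dmg(G;s)\le n-2$ from Proposition~\ref{prop:damageAtMostNMinus2} gives $\dmg(G;s)=n-2$.

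For the upper bound when $2\le s\le\ell$, I would have the cop start on $c$ and, once the robbers have revealed their initial positions, choose a leg $i^\ast$ of greatest length that contains a robber and then guard the pendant edge $c\,u^{(i^\ast)}_1$: the cop stays on $c$, steps to $u^{(i^\ast)}_1$ to capture any robber that moves onto it, and steps back to capture any robber that reaches $c$. The thing to check is that the cop ends every round on $c$ or on $u^{(i^\ast)}_1$, so that neither of those vertices is ever damaged and no robber can pass through $c$ to another leg; hence the robbers stay on their starting legs and the total damage is at most $\bigl(\sum_{i:\ \text{leg }i\text{ holds a robber}}k_i\bigr)-1\le\sum_{i=1}^{s}k_i-1$, using that at most $s$ legs hold robbers and $k_1\ge\dots\ge k_\ell$.

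For the lower bound I would describe a robber strategy damaging at least $\sum_{i=1}^{s}k_i-1$ vertices against every cop. After the cop commits to a starting vertex, the robbers place one robber on each of the $s$ longest legs — at the leaf if the cop did not start on that leg, and otherwise on the centre side of the cop — and then each robber walks inward, damaging $u^{(i)}_{k_i},\dots,u^{(i)}_2$, after which all $s$ robbers advance simultaneously onto the neighbours $u^{(i)}_1$ of $c$; since the cop captures at most one robber per round, at least $s-1$ of the vertices $u^{(i)}_1$ are damaged, for a total of $\sum_{i=1}^{s}(k_i-1)+(s-1)=\sum_{i=1}^{s}k_i-1$. As in Theorem~\ref{thm:path}, the substance is verifying that the cop cannot hold the total lower: the key point is that while the cop lingers in the closed neighbourhood of $c$ the simultaneous advance is unstoppable, while the moment the cop leaves that neighbourhood to chase a robber down a leg the other robbers cross through $c$ and reach at least as many vertices (the shorter legs included, when $s<\ell$). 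The sub-cases where the cop starts on one of the $s$ longest legs are meant to be absorbed by the modified placement, with the robber on that leg mopping up behind the retreating cop.

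The main obstacle, as in the path case, will be the lower-bound case analysis, and within it the bookkeeping when the cop begins partway out along a long leg: one must pin down exactly how many vertices of that leg stay protected, confirm that the robber assigned to it can damage the rest while keeping a safe distance from the cop, and check in each configuration that abandoning the neighbourhood of $c$ costs the cop at least what it saves. This is the analogue of the long position-by-position argument in the proof of Theorem~\ref{thm:path} and should be the bulk of the write-up; the upper bound, by contrast, should be short once one confirms that with the cop alternating between $c$ and $u^{(i^\ast)}_1$ no robber can slip across $c$.
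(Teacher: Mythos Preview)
Your upper bound and the reduction from $s>\ell$ to $s=\ell$ match the paper's approach and are fine. The real issue is the lower-bound strategy: placing robbers at the \emph{leaves} and walking inward does not work when the $s$ longest legs have very different lengths. Consider $\ell=s=3$ with $k_1=100$, $k_2=k_3=1$, so the target is $\sum k_i-1=101$. Under your placement the cop starts at $c$, captures $R_2$ on $u^{(2)}_1$ in round~1, returns to $c$ and captures $R_3$ by round~2 or~3, and thereafter protects the edge $\{c,u^{(1)}_1\}$. The robber $R_1$, still near $u^{(1)}_{100}$, can damage at most the $99$ vertices $u^{(1)}_2,\dots,u^{(1)}_{100}$, and $R_3$ contributes only $u^{(3)}_1$; total damage is $100<101$. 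The ``simultaneous advance onto the $u^{(i)}_1$'' never happens, because the short-leg robbers are captured long before $R_1$ arrives near $c$, and the cop accomplishes this without ever leaving $N[c]$ --- so neither branch of your dichotomy (cop lingers in $N[c]$ versus cop chases down a leg) saves the argument.

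The paper avoids this by placing robbers at distance~$2$ from $c$ (on $u^{(i)}_2$) rather than at the leaves, so they are all close enough to coordinate. From there one robber heads outward while the others hold position, and the case analysis tracks the cop's first departure from $c$; when the cop starts on one of the $s$ longest legs, two robbers play the path strategy of Theorem~\ref{thm:path} along the two-leg path through $c$. Legs of length~$1$ among the $s$ longest require a separate (short) argument. Your write-up would need to replace the leaf placement with this near-centre placement for the lower bound to go through.
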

\begin{proof}
Let the vertex in the center of the spider be $c$. If $s > \ell$, the fact that $\dmg(G;s) \leq n - 2$ follows from Proposition \ref{prop:damageAtMostNMinus2}. If $2 \leq s\leq \ell$, suppose the cop starts on $c$ and remains there unless a robber moves to a neighbor of $c$. In this case, the cop's strategy will be to capture the robber and return back to $c$. This implies that if the robbers start on the $s$ longest legs, the cop can protect all of the other legs, as well as one vertex in a leg that contains a robber. Therefore, the cop can protect $n - \left(\sum_{i=1}^s k_i\right) + 1$ vertices and $\dmg(G; s) \leq \left(\sum_{i=1}^s k_i\right) -1$. 

If $s >l$, the robbers can behave as $\ell$ robbers which implies $\dmg(G; s)\geq \dmg(G; \ell)$. Since $(\sum_{i=1}^{\ell} k_i) -1 = n-2$, it is sufficient to assume $2 \leq s\leq \ell$ and provide a strategy for the robbers such that they can always damage at least $\left(\sum_{i=1}^s k_i\right) -1$ vertices for every cop strategy. We first consider the case where $k_i\geq 2$ for all $1\leq i\leq s$. Let $v_i$ be the vertex adjacent to $c$ in the leg of length $k_i$ for $1\leq i\leq \ell$, and let $u_i$ be the vertex adjacent to $v_i$ which is not $c$ for $1\leq i\leq s$. Call the $s$ robbers $R_1,R_2,\dots, R_s$.

Suppose the cop starts on $c$ and let $R_i$ place on $u_i$ for each $1\leq i\leq s$. If the cop moves in round one to $v_j$ for some $s+1\leq j\leq \ell$, each robber $R_i$ can move to $v_i$ and damage it. Then, regardless of what the cop does next, $R_i$ can move to the leaf in their leg without being captured. Thus, damage is at least $\left(\sum_{i=1}^s k_i\right)$. If the cop moves in round one to $v_j$ for some $1\leq j\leq s$, then $R_j$ will move towards the leaf in their leg and all the other robbers $R_i$ can move to $v_i$. On each subsequent round, regardless of what the cop does, each robber can move towards the leaf in their leg without being captured. Thus, at least $\left(\sum_{i=1}^s k_i\right)-1$ vertices are damaged.

If the cop passes during round 1, let $R_s$ move towards the leaf in its leg. While the cop remains on $c$, the other robbers should not move. If the cop ever moves from $c$ to $v_j$ for some $1\leq j\leq \ell$, all robbers $R_i$ for $i\not=s,j$ should move to $v_i$. In every round after this, each $R_i$ should move towards the leaf in their leg, damaging $k_i$ vertices. If $s\leq j\leq \ell$, then the robbers $R_1,\dots, R_{s-1}$ damage $\sum_{i=1}^{s-1} k_i$ vertices and $R_s$ damages $k_s-1$ vertices, so at least $\left(\sum_{i=1}^s k_i\right)-1$ vertices are damaged. 


If $1\leq j\leq s-1$, then $R_j$ should maintain a distance of two from the cop as long as they share a leg, or until $R_j$ is forced to the leaf of their leg and captured. If $R_j$ is captured, the cop will take at least $2k_j+1$ rounds to capture $R_j$ and return to the center (since the cop passed in the first round). However, $R_s$ can move to the end of their leg and back to $v_s$ in only $2k_s-1$ rounds. Since $k_s\leq k_j$, $R_s$ can damage every vertex in its leg, including $v_s$, without being captured. Each remaining robber $R_i$ for $i\not=s,j$ also damages $k_i$ vertices and $R_j$ damages $k_j-1$ vertices. Therefore, at least $\left(\sum_{i=1}^s k_i\right)-1$ are damaged.


Next, assume the cop does not capture $R_j$. Since $R_j$ can always maintain a distance of two from the cop, if the cop ever moves into another leg, then $R_j$ can damage $v_j$. After damaging $v_j$, $R_j$ can stop following the cop and move to the leaf in their leg, damaging $k_j$ vertices. Since all other robbers also damaged all of the vertices in their legs (except for $R_s$, which damaged at least $k_s-1$ vertices), damage is at least $\left(\sum_{i=1}^s k_i\right)-1$. If the cop never leaves the leg containing $R_j$, then $R_j$ can maintain a distance of two from the cop until $R_s$ moves from the leaf in their leg and back to $v_s$. Since the cop is on the leg with $R_j$, it follows that $R_s$ can damage $v_s$ without being captured. After this, $R_j$ can move to the leaf in their leg, damaging $k_j-1$ vertices ($v_j$ will not be damaged). Since all other robbers damaged all of the vertices in their legs, damage is at least $\left(\sum_{i=1}^s k_i\right)-1$.


If the cop starts on one of the $\ell-s$ shortest legs, let $R_i$ place on $v_i$ for $1\leq i\leq s$. Regardless of what the cop does, each robber can move towards the end of their leg on each turn, and will not be caught before they damage every vertex in their leg. Therefore, damage is at least $\sum_{i=1}^s k_i$.

Next, let the cop start on one of the $s$ longest legs; specifically, suppose the cop starts on a vertex on the leg of length $k_j$ for some $1\leq j\leq s$. Choose another leg of length $k_t$ for some $1\leq t\leq s$ and $t\not=j$, and consider the path $P$ of length $k_j+k_t+1$ formed by the two legs and the center vertex. Place two robbers on $P$ in the optimal starting positions relative to the cop for a path on $k_j+k_t+1$ vertices. All other robbers $R_i$ for $1\leq i\leq s$ and $i\not=j,t$ should place on $v_i$. Regardless of what the cop does, each $R_i$ can move towards the end of their leg during each round, damaging all $k_i$ vertices in their leg. Meanwhile, as long as the cop remains on $P$, $R_j$ and $R_t$ should follow the strategy for a path of that length, as outlined in the proof of Theorem \ref{thm:path}. If the cop never leaves $P$, the damage on the path is at least $k_j+k_t+1-2$ and total damage is at least $\left(\sum_{i=1}^s k_i\right)-1$.

Now assume that at some point, the cop leaves $P$ and enters another leg. Consider what strategy each robber was employing on the previous turn, when the cop was necessarily on $c$. If neither robber was attempting to remain two vertices away from the cop, then each robber can continue employing their current strategies from the proof of Theorem \ref{thm:path} and they will be able to damage their parts of the path, damaging at least $k_j+k_t-1$ vertices together. Now suppose one of the robbers was attempting to remain two vertices away from the cop on $P$. Without loss of generality, let this robber be $R_t$. Note, in this case, neither robber will have been captured. While the cop is on $c$ or in another leg of $G$, both robbers should act as if the cop is on $c$. Then, $R_t$ is necessarily on $u_t$ and will remain on this vertex as long as the cop doesn't move to $v_j$ or $v_t$, or until $R_j$ damages all vertices on the other leg in $P$, whichever happens first. If the cop moves to $v_j$ or $v_t$, the robbers continue playing their strategy outlined in Theorem \ref{thm:path} until they damage $k_j+k_t-1$ vertices. If $R_j$ damages all the vertices on their side of $c$ first, then $R_t$ can now move to the leaf on the other side of $c$ in $P$. In this case, the two robbers still damage $k_j+k_t-1$ vertices. Therefore, all $s$ cops together damage at least $\left(\sum_{i=1}^s k_i\right)-1$ vertices.

Finally, we consider the case where $k_p=1$ for some $1\leq p\leq s$ and note this implies that $k_i=1$ for all $p\leq i\leq \ell$. Note if $p=1$, all legs have length one. If the cop starts on $c$ and the robbers all start on $v_1,\cdots, v_s$, the cop can capture at most one robber on the first round, so at least $s-1=\left(\sum_{i=1}^s k_i\right)-1$ vertices are damaged. If the cop does not start on $c$, the robbers can start on at least $s-1$ of the vertices $v_1,\cdots, v_s$ and the cop cannot capture a robber on the first round. Thus, at least $s-1=\left(\sum_{i=1}^s k_i\right)-1$ vertices are damaged. 

Now, assume $p \geq 2$, so there exists at least one leg of length at least two. In this case, if the cop starts on a vertex other than $c$, the argument follows as in the case where $k_i\geq 2$ for each $1 \leq i \leq s$. If the cop starts on $c$, let $R_i$ place on $u_i$ for each $1\leq i\leq p-1$ and let $R_i$ place on $v_i$ for each $p\leq i\leq s$. If the cop moves in the first round to a leg of length one (which may or may not contain a robber), the vertex in that leg is not damaged. However, all robbers $R_i$ not contained in that leg can then damage $v_i$ in at most two rounds (moving to do so if necessary) as well as any remaining vertices in their respective legs. So in this case, damage is at least $\left(\sum_{i=1}^s k_i\right)-1$. If the cop moves in the first round to a leg of length at least two, the argument proceeds the same as the $k_i\geq 2$ case. If the cop does not move in the first round, then all robbers $R_i$ for $p\leq i\leq s$ damage the vertex in their leg since they are not captured in this round. Let $R_{p-1}$, the robber on the shortest leg with length at least 2, move towards the leaf in their leg while all robbers $R_j$ such that $1\leq j\leq p-2$ (if such robbers exist) remain still. From here, the argument again follows as in the $k_i\geq 2$ case.

We have shown that for each cop strategy, the $s$ robbers can damage at least $\left(\sum_{i=1}^s k_i\right)-1$ vertices, obtaining the desired result.\end{proof}


\section{Graphs with extreme $2$-robber damage number}\label{sec:extreme2robber}
We now turn our focus to the $2$-robber damage number and consider graphs for which this number is high and low. First, we note that by Proposition \ref{prop:Empty}, $\dmg(\overline{K_1},2)=0=n-1$, $\dmg(\overline{K_2},2)=1=n-1$, $\dmg(\overline{K_3},2)=2=n-1$, and $\dmg(\overline{K_n},2)=2<n-1$ for all $n\geq 4$. We also know that $\dmg(P_2,2)=0=n-2$ by Theorem \ref{thm:path}. Having considered all graphs on one and two vertices and all graphs on three or more vertices without an edge, we can now apply our general bounds from Section \ref{sec:generalBounds} to all other graphs. Proposition \ref{prop:damageAtLeastSMinus1} shows that for every graph $G$ on at least $3$ vertices, $\dmg(G,2)\geq 1$ and Proposition \ref{prop:damageAtMostNMinus2} shows that $\dmg(G,2)\leq n-2$ for every graph on at least $2$ vertices and at least $1$ edge. In this section, we provide a complete characterization of the graphs which achieve these bounds, starting with $\dmg(G,2)=n-2$.

\begin{thm}\label{thm:CharN-2}
Suppose $G$ is a graph on $n$ vertices, 
\[\mathcal{G'} = \{\overline{K_4}, K_2 \cup K_1 \cup K_1, K_2 \cup K_2 \cup K_1, K_2 \cup K_2 \cup K_2, K_2 \cup K_1, K_2 \cup K_2\},\]
and $\mathcal{G} = \{P_n ~|~ n \geq 2\} \cup \{C_n \ | \ n \geq 3\} \cup \mathcal{G'}$.
Then, $\dmg(G; 2) = n-2$ if and only if $G \in \mathcal{G}$.
\end{thm}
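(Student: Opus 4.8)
The plan is to prove both directions. The ``if'' direction is immediate: for $P_n$ with $n \ge 2$ we have $\dmg(P_n;2) = n-2$ by Theorem~\ref{thm:path}, for $C_n$ with $n \ge 3$ we have $\dmg(C_n;2) = n-2$ by Theorem~\ref{thm:cycle}, and for each of the six graphs in $\mathcal{G}'$ one checks the value directly---most follow from Proposition~\ref{prop:Empty} (for $\overline{K_4}$, $\dmg = 2 = n-2$) or from small case analysis on at most six vertices (for the graphs involving $K_2$'s, use that a disjoint $K_2$ with no cop on it has both vertices damaged, while the cop can protect only a single $K_2$ or a single edge). So the real content is the ``only if'' direction: if $\dmg(G;2) = n-2$, then $G \in \mathcal{G}$.

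For the ``only if'' direction I would argue by contradiction, assuming $G \notin \mathcal{G}$ and producing a cop strategy that protects at least three vertices. First dispose of the low-vertex and edgeless cases using the remarks preceding the theorem: $\dmg(G;2) = n-2$ forces $n \ge 2$ and, if $n \ge 3$, $G$ has at least one edge (otherwise $G = \overline{K_n}$ with $n \ge 4$ gives $\dmg = 2 < n-2$, and $\overline{K_3}, \overline{K_2}, \overline{K_1}$ are handled separately---note $\overline{K_3} = K_1 \cup K_1 \cup K_1 \notin \mathcal{G}$, so in fact we must check $\dmg(\overline{K_3};2) = 2 = n-1 \ne n-2$, consistent with exclusion). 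Now split on $\Delta(G)$. If $\Delta(G) \ge 3$, then Proposition~\ref{prop:maxDegreeThree} gives $\dmg(G;2) \le n-3 < n-2$, a contradiction; so $\Delta(G) \le 2$. Graphs with $\Delta(G) \le 2$ are disjoint unions of paths and cycles. If $G$ is connected it is a single path or cycle, hence in $\mathcal{G}$; so $G$ is disconnected with every component a path or cycle.

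The heart of the argument is then: a disjoint union of paths and cycles with at least two components has $\dmg(G;2) = n-2$ only for the six graphs in $\mathcal{G}'$. Here I would use the disjoint-union formula (Proposition preceding Proposition~\ref{damage at most n-d}, the one computing $\dmg(G \cup H;s)$). Write $G = H_1 \cup \cdots \cup H_m$ with $m \ge 2$; the cop starts on one component, say $H_1$, and one robber suffices to damage every vertex of every component $H_i$ with $i \ge 2$ entirely, while within $H_1$ the situation reduces to the single-robber damage number on $H_1$ plus whatever the second robber contributes---but crucially a single cop on a path or cycle of order $\ge 3$ can always protect at least two vertices, and often more when it is small. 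Concretely, applying the recursive formula and Theorem~\ref{thm:path}/\ref{thm:cycle} for the ``all robbers in the cop's component'' branch, one sees that to force $n-2$ damage every component other than the cop's must be ``small'' (else the cop moves to it and protects $\ge 3$ there, or protecting two in $H_1$ plus two in another component beats $n-2$), which pins down that all components are $K_1$ or $K_2$, that there is at most a bounded number of $K_2$'s (at most three), and that $\overline{K_4}$ ($= 4K_1$, but note $3K_1 = \overline{K_3}$ is already excluded and $\ge 5 K_1$ gives $\dmg = 2 < n-2$) is the only all-$K_1$ survivor. Carefully enumerating the finitely many unions of $K_1$'s and $K_2$'s that remain yields exactly $\mathcal{G}'$.

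The main obstacle I anticipate is this last enumeration: getting the disjoint-union bookkeeping exactly right so that the six listed graphs---and no others---survive. In particular one must verify that $K_2 \cup K_2 \cup K_2 \cup K_2$ and $K_2 \cup K_1 \cup K_1 \cup K_1$ and similar slightly-larger unions have $\dmg(G;2) \le n-3$: the cop on one $K_2$ protects two vertices, but then the two robbers together can only damage the other $n-2$ vertices, so these actually are tight---meaning the cutoff is subtler than ``at most three $K_2$'s,'' and I will need to track precisely how the $|H|$ terms and the $\dmg(\cdot;s-1)$ terms in the recursion interact. The cleanest route is probably to prove a lemma pinning down $\dmg(G;2)$ exactly for an arbitrary disjoint union of paths and cycles as a min-max expression, then solve the resulting arithmetic condition $\dmg(G;2) = n-2$ over all such $G$ with $\ge 2$ components; the spider theorem (Theorem~\ref{thm:star}) and the path/cycle theorems supply all the component-level inputs needed.
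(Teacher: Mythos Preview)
Your overall architecture—reduce to $\Delta(G)\le 2$ via Proposition~\ref{prop:maxDegreeThree}, then classify disjoint unions of paths and cycles—matches the paper exactly. The gap is in how you plan to carry out the classification of the disconnected case.

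The worry you flag about $4K_2$ and $K_2\cup 3K_1$ being ``tight'' at $n-2$ is a symptom of a missing idea. In fact $\dmg(4K_2;2)=4\ne 6=n-2$: each robber is confined to the component it starts in, so two robbers occupy at most two components and every other component (including the cop's, and at least one further one) is entirely undamaged. This confinement-plus-pigeonhole observation is exactly what the paper uses to organize the disconnected case. With $m$ components and three players total, at least $m-3$ components are empty; hence $m\ge 5$ is immediate, and $m=4$ forces every component to be $K_1$ (otherwise the cop protects an edge and an empty component contributes a third protected vertex), leaving only $\overline{K_4}$. For $m=3$ and $m=2$ the paper does short direct case analyses on component sizes, invoking the one-robber damage number $\dmg(P_k)=\dmg(C_k)=\lfloor (k-1)/2\rfloor$ from \cite{CS19} for the two-component case. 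No recursive formula is needed.

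Your proposed tool, the disjoint-union proposition, also will not iterate the way you want: its proof asserts that a single robber on the cop-free side $H$ damages all $|H|$ vertices, which holds only when $H$ is connected. So the formula cannot be applied with either side disconnected, and hence cannot be chained through a graph with three or more components without first redoing the analysis of $\dmg(\,\cdot\,;1)$ on multi-component graphs by hand. Once you add the confinement observation above, the enumeration you are worried about becomes straightforward rather than ``subtle,'' and your route collapses into the paper's.
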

\begin{proof}
By Proposition \ref{prop:maxDegreeThree}, if a graph $G$ has $\dmg(G;2) = n-2$, then $\Delta(G) \leq 2$. Therefore, it suffices to determine which graphs with $\Delta(G) \leq 2$ satisfy  $\dmg(G;2) = n-2$. Observe that if $\Delta(G) \leq 2$, then every component of $G$ is a path or cycle. We can divide these graphs into cases based on the number of components in $G$. First, consider the case where $G$ has at least five components. This implies that there are at least three components that do not contain a robber and therefore, at least three vertices remain undamaged. Thus, any graph with five or more components must not satisfy $\dmg(G; 2) = n-2$. 

Next, suppose $G$ has four components and there exists a component with at least two vertices (i.e., $G \ncong \overline{K_4}$). If the cop starts on the component with at least two vertices and protects an edge, they prevent two vertices from being damaged. Also, since there are four components and only three players, there is always at least one empty component which implies that at least one additional vertex will be undamaged. Therefore, $\dmg(G; 2) \leq n-3$. Finally, the only other graph to consider with four components is $\overline{K_4}$. By Proposition \ref{prop:Empty}, $\dmg(\overline{K_4};2)=2=n-2$.

Now, suppose $G$ has three components and at least one component contains three or more vertices. Assume the cop starts on a component that contains at least three vertices. We again show that no matter what the robbers do, the cop can prevent damage to at least three vertices. First, suppose the robbers start on separate components and not the one with the cop. This implies that the cop can protect their entire component and therefore, at least three vertices are protected. Next, suppose one robber starts on the same component as the cop and the other robber starts on another component. This implies that the third component remains undamaged and hence, at least one vertex in that component is protected. We also know that the cop can protect an edge in their component and therefore, at least two more vertices are protected. Thus, at least three vertices in total remain undamaged. Finally, suppose both robbers start on the component with the cop. Again, we know the cop can protect an edge, preventing two vertices from being damaged in their component. Since there are two more components, we know at least four vertices can be protected in total. This implies that $\dmg(G; 2) \leq n-3$, regardless of the robbers' strategy. 

The only remaining graphs to consider with three components have at most two vertices in each component. This includes four graphs: $K_2 \cup K_1 \cup K_1, K_2 \cup K_2 \cup K_1, K_2 \cup K_2 \cup K_2, \overline{K_3}$.  Consider the first three graphs, each of which contains $K_2$ as a component. In this case, it is optimal for the cop to start on $K_2$. Then, it is clear that the cop can protect two vertices and the robbers can always damage the remaining $n-2$ vertices by starting on separate components. Therefore, $\dmg(G; 2) = n-2$ for these three graphs. Clearly, if  $G \cong \overline{K_3}$, $\dmg(G; 2) = 2 = n-1$.

Next, suppose $G$ has two components and one component contains at least four vertices. We know that this component, $G_1$, must be a path or cycle on four or more vertices. Let the cop start on $G_1$. First, note that if both robbers start on $G_1$, at most $n-3$ vertices are damaged because $\dmg(G_1; 2) = |G_1|-2 < n-2$. Next, let one robber start on $G_1$ and let the other robber start on the other component, $G_2$. Since $G_1$ is a path or a cycle and $|G_1| \geq 4$, we know that $\dmg(G_1) = \lfloor\frac{|G_1|-1}{2}\rfloor \leq \frac{|G_1|-1}{2}$ \cite{CS19}. Note that if $\frac{|G_1|-1}{2} > |G_1|-3$, then $|G_1| < 5$. This implies that if $|G_1| \geq 5$, $\lfloor\frac{|G_1|-1}{2}\rfloor \leq \frac{|G_1|-1}{2} \leq |G_1|-3$. Furthermore, if $|G_1|=4$, then $\lfloor \frac{|G_1|-1}{2} \rfloor = 1 = |G_1|-3$. Therefore in this case,  $\dmg(G_1) \leq |G_1|-3$ because $|G_1| \geq 4$. This implies that $\dmg(G; 2) \leq |G_2| + |G_1| - 3 = n-3$. 

Now, we consider graphs $G$ with two components where both components contain fewer than four vertices. Suppose one of the components of $G$ has at least three vertices. This implies that one component (say $G_1$) is either $C_3$ or $P_3$. Let the cop start on $G_1$. First, let one robber start on $G_1$ and let the other robber start on the other component, $G_2$. We know that the damage number of $P_3$ and $C_3$ is zero and therefore, at least three vertices are protected. If both robbers start on $G_1$, then two vertices are protected in the first component (since $\dmg(G_1; 2) = 1$) and at least one vertex remains undamaged from the second component. Finally, if both robbers start on $G_2$, it is clear that $|G_1| = 3$ vertices are protected. Thus, $\dmg(G; 2) \leq n-3$. 

Next, suppose both components of $G$ contain at most two vertices. This means $G$ is either $K_2 \cup K_1$, $K_2 \cup K_2$ or $\overline{K_2}$. For the first two graphs, we know the cop can protect an edge and therefore protect a $K_2$ which means the robbers can only damage the vertices in the other component. This implies that $\dmg(K_2 \cup K_1; 2) = 1 = n-2$ and $\dmg(K_2 \cup K_2; 2) = 2 = n-2$. If $G = \overline{K_2}$, then $\dmg(G; 2) = 1 = n-1$. 

Finally, we consider graphs $G$ with one component. In this case, $G$ is either $P_n$ or $C_n$. From Theorem \ref{thm:path}, we know that for $n \geq 2$, $\dmg(P_n; 2) = n-2$ and from Theorem \ref{thm:cycle}, we know that for $n \geq 3$, $\dmg(C_n; 2) = n-2$.
\end{proof}

Turning our attention to graphs with $2$-robber damage number equal to one, we first characterize such graphs on at most four vertices.

\begin{prop}
Let $G$ be a graph on $n\leq 4$ vertices. Then, $\dmg(G,2)=1$ if and only if $G$ is not one of the graphs in $\{P_2,P_4,C_4,\overline{K_1},\overline{K_3},\overline{K_4},K_2\cup K_2, K_2\cup K_1 \cup K_1\}$.
\end{prop}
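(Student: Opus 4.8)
The plan is to verify the claim directly by enumerating all graphs on $n \le 4$ vertices and computing $\dmg(G;2)$ for each, using the general bounds already established. For $n=1,2,3$ the verification is short: on $n=1$ the only graph is $\overline{K_1}$ with $\dmg = 0$; on $n=2$ we have $K_2$ (with $\dmg(K_2;2)=0$ by Theorem~\ref{thm:path} since $K_2 = P_2$) and $\overline{K_2}$ (with $\dmg = 1$ by Proposition~\ref{prop:Empty}, but $\overline{K_2}$ is explicitly excluded... wait, it is not in the list, so I must double-check: $\overline{K_2}$ has $n=2$, $\dmg(\overline{K_2};2) = \min\{2, 1\} = 1$, so it \emph{should} satisfy the condition, and indeed it is not among the excluded graphs). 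For $n=3$ the graphs are $P_3$, $K_3$, $K_2\cup K_1$, and $\overline{K_3}$; I compute $\dmg(P_3;2)=1$, $\dmg(K_3;2)=1$ (using $\binom{2}{2}=1 \le n-2=1$), $\dmg(K_2\cup K_1;2)=1$, and $\dmg(\overline{K_3};2)=2$, confirming that exactly $\overline{K_3}$ is excluded. Then the bulk of the work is the case $n=4$: there are eleven graphs on four vertices, and for each I must either cite one of the results above or give a short ad hoc argument.

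The main work, then, is the $n=4$ enumeration. I would organize it by number of edges or by a standard list: $\overline{K_4}$ ($\dmg = 2$, excluded), $K_2 \cup K_1 \cup K_1$ ($\dmg = n-2 = 2$ by Theorem~\ref{thm:CharN-2}, excluded), $P_3 \cup K_1$, $K_2 \cup K_2$ ($\dmg = n-2 = 2$ by Theorem~\ref{thm:CharN-2}, excluded), $P_4$ ($\dmg = n-2 = 2$ by Theorem~\ref{thm:path}, excluded), $K_3 \cup K_1$, $K_{1,3}$, $C_4$ ($\dmg = n-2 = 2$ by Theorem~\ref{thm:cycle}, excluded), the ``paw'' $K_3$ with a pendant, the ``diamond'' $K_4 - e$, and $K_4$. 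For the graphs not already handled: $K_{1,3}$, the paw, the diamond, and $K_4$ all have $\Delta \ge 3$, so $\dmg \le n - 3 = 1$ by Proposition~\ref{prop:maxDegreeThree}, and $\dmg \ge 1$ by Proposition~\ref{prop:damageAtLeastSMinus1} (since $s=2 \le n-1$), giving $\dmg = 1$; these are \emph{not} excluded, consistent with the statement. For $P_3 \cup K_1$: the cop starts on the center of $P_3$, toggling to protect a pendant edge protects two vertices of $P_3$, and the isolated $K_1$ is the third protected vertex (one robber can damage at most the remaining vertex), so $\dmg \le 1$; combined with $\dmg \ge 1$, we get $\dmg = 1$, not excluded. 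For $K_3 \cup K_1$: the cop starts on $K_3$; since $K_3$ has a dominating vertex the cop protects two vertices of $K_3$ in a toggle, and the $K_1$ is protected, so $\dmg \le 1 = \dmg \ge 1$, giving $\dmg = 1$, not excluded.

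The one subtlety I anticipate is making sure the enumeration of graphs on four vertices is genuinely complete (eleven unlabeled simple graphs) and that none of the ``boundary'' computations is off by one — in particular the three-vertex graph $\overline{K_3}$ gives $\dmg = 2 = n-1 \ne 1$, and on four vertices the only graphs with $\dmg \ne 1$ are those with $\dmg = 0$ (none, since every graph on $\ge 3$ vertices has $\dmg \ge 1$ whenever it has $\ge 2$... actually $\dmg \ge 1$ holds for all $n \ge 3$ by Proposition~\ref{prop:damageAtLeastSMinus1}) and those with $\dmg = n-2 = 2$. So I would close by noting that the graphs on at most four vertices with $\dmg(G;2) \ne 1$ are exactly: the ones with $\dmg = 0$, namely $\overline{K_1}$ and $P_2$; the ones with $\dmg = n-1 \ge 2$, namely $\overline{K_3}$ and $\overline{K_4}$; and the remaining ones with $\dmg = n-2 \ge 2$, which by Theorem~\ref{thm:CharN-2} restricted to $n \le 4$ are $P_4$, $C_4$, $K_2 \cup K_2$, and $K_2 \cup K_1 \cup K_1$. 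This union is exactly the excluded list, completing the proof. The only real ``obstacle'' is bookkeeping: matching each of the eleven four-vertex graphs to the correct prior result without error.
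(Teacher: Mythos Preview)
Your closing paragraph \emph{is} the paper's proof, almost verbatim: for the excluded graphs one cites Theorem~\ref{thm:CharN-2} and Proposition~\ref{prop:Empty} directly, and for every remaining graph on $3\le n\le 4$ vertices one combines $\dmg(G;2)\ge 1$ (Proposition~\ref{prop:damageAtLeastSMinus1}) with $\dmg(G;2)\le n-2$ and $\dmg(G;2)\ne n-2$ (Proposition~\ref{prop:damageAtMostNMinus2} plus Theorem~\ref{thm:CharN-2}) to pin $\dmg(G;2)=1$. That argument is complete on its own, so the eleven-graph enumeration preceding it is redundant.

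Two small slips in that enumeration are worth noting, even though your final argument bypasses them. First, in your ad~hoc treatment of $P_3\cup K_1$ and $K_3\cup K_1$, the cop strategy you describe (``toggling to protect a pendant edge'') does \emph{not} protect the isolated $K_1$; a robber placed there damages it. The correct reason $\dmg\le 1$ is that the cop, starting at the center, can capture whichever robber is in the $P_3$ (or $K_3$) component in round one, leaving only the isolated vertex to be damaged. Second, $\dmg(\overline{K_4};2)=\min\{2,3\}=2=n-2$, not $n-1$; it belongs in your ``$\dmg=n-2$'' bucket (and indeed it appears in $\mathcal{G}'$ of Theorem~\ref{thm:CharN-2}), not the ``$\dmg=n-1$'' bucket. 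Neither error affects the final list, but you should clean them up or, better, simply drop the case-by-case enumeration and lead with the summarizing argument.
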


\begin{proof}
By Theorem \ref{thm:CharN-2}, $\dmg(P_2,2)=0$, $\dmg(P_4,2)=2$, $\dmg(C_4,2)=2$, $\dmg(\overline{K_4},2)=2$, $\dmg(K_2\cup K_2,2)=2$, and $\dmg(K_2\cup K_1 \cup K_1,2)=2$. By Proposition \ref{prop:Empty}, $\dmg(\overline{K_1},2)=0$ and $\dmg(\overline{K_3},2)=2$.

Let $G$ be a graph on four vertices which is not $P_4,C_4,\overline{K_4},K_2\cup K_2,$ or $K_2\cup K_1 \cup K_1$. Then, by Proposition \ref{prop:damageAtMostNMinus2} and Theorem \ref{thm:CharN-2}, $\dmg(G,2)\leq 1$. By Proposition \ref{prop:damageAtLeastSMinus1}, $\dmg(G,2)\geq 1$, so $\dmg(G,2)=1$. Let $G$ be a graph on three vertices which is not $\overline{K_3}$. Then, by Proposition \ref{prop:damageAtMostNMinus2}, $\dmg(G,2)\leq 1$ and by Proposition \ref{prop:damageAtLeastSMinus1}, $\dmg(G,2)\geq 1$, so $\dmg(G,2)=1$. Finally, the only graph on two vertices which is not $P_2$ is $\overline{K_2}$, and by Proposition \ref{prop:Empty}, $\dmg(\overline{K_2},2)=1$.
\end{proof}

Finally, we show that a graph $G$ on five or more vertices has $\dmg(G,2)=1$ if and only if it is a threshold graph. A graph is a \emph{threshold graph} if it can be constructed from a single vertex by repeated additions of a single isolated vertex or a single dominating vertex. Alternatively, it is well known that threshold graphs can be characterized by their forbidden subgraphs: a graph is threshold if and only if no four of its vertices contain $P_4$, $C_4$, or $K_2\cup K_2$ as an induced subgraph. Both of these characterizations are crucial in the following argument. 

\begin{thm}
Let $G$ be a graph on $n\geq 5$ vertices. Then, $\dmg(G,2)=1$ if and only if $G$ is a threshold graph with no more than one isolated vertex.
\end{thm}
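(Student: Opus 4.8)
Since $\dmg(G,2)\ge 1$ for every graph on at least three vertices (Proposition~\ref{prop:damageAtLeastSMinus1}), it suffices to characterize when $\dmg(G,2)\le 1$. I would prove each implication by exhibiting a strategy, relying on two structural facts about threshold graphs: from the creation sequence, a threshold graph is the disjoint union of a connected threshold graph and some isolated vertices, and a connected threshold graph has a dominating vertex; and the neighborhoods of a threshold graph are nested, i.e.\ for any two vertices $x,y$ either $N(x)\setminus\{y\}\subseteq N(y)\setminus\{x\}$ or the reverse containment holds.

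For the forward implication ($G$ threshold with at most one isolated vertex $\Rightarrow \dmg(G,2)\le 1$): write $G=H$ or $G=H\cup\{w\}$, where $H$ is a connected threshold graph with dominating vertex $d$, and have the cop start on $d$. A robber that starts on $w$ is immobile, only ever damages $w$, and otherwise plays no role, so it suffices to guarantee that the robber(s) sitting in $H$ cause no further damage. A lone robber in $H$ is captured on the cop's first move since $d$ is dominating in $H$; so the crux is two robbers in $H$, on vertices $u_1,u_2$, both adjacent to $d$. Here the cop captures in round~1 the robber whose closed neighborhood is the larger of the two, say $R_2$ on $u_2$, and the only vertex damaged so far is $u_1$. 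By the nested-neighborhood property, every vertex $x\neq u_2$ reachable by the surviving robber $R_1$ from $u_1$ lies in $N(u_2)$, so the cop can capture $R_1$ on its next move; and if $R_1$ instead stays put on $u_1$ with $u_1\not\sim u_2$, the cop retreats $u_2\to d$ and captures $R_1$ one move later from $d$ (which dominates $H$). In every branch no vertex besides $u_1$ is ever damaged, so $\dmg(G,2)\le 1$.

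For the reverse implication I would prove the contrapositive. If $G$ has at least two isolated vertices, the robbers win trivially: either the cop starts on an isolated vertex and can never move, or the robbers occupy two isolated vertices, which are never captured; either way two vertices are damaged. Otherwise $G$ is not threshold, hence contains an induced $P_4$, $C_4$, or $K_2\cup K_2$ on a set $S$ of four vertices. A short case check shows that each of these three subgraphs contains two disjoint ``good'' pairs $\{p,r\}\subseteq S$, where good means $p$ has a neighbor other than $r$ that is non-adjacent to $r$, and symmetrically for $r$. Given the cop's start $v$, pick a good pair $\{p,r\}$ avoiding $v$ (possible since the two good pairs are disjoint) and place the two robbers on $p$ and $r$. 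In round~1 the cop moves to at most one vertex: if it moves to neither $p$ nor $r$, both are damaged; if it moves onto $p$, capturing one robber, then $r$ is damaged and the surviving robber escapes within $S$ to a vertex $r'$ non-adjacent to $p$, which the cop --- now at $p$ --- cannot reach in round~2, so $r'$ is damaged as well. Either way $\dmg(G,2)\ge 2$.

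I expect the forward implication's cop strategy to be the main obstacle: arguing that the cop mops up the second robber without a second vertex ever being damaged genuinely uses the nested-neighborhood structure, and the analogous strategy fails for arbitrary graphs with a dominating vertex (for instance $C_4$ plus an apex vertex is not threshold and has $\dmg=2$). The bookkeeping in the ``retreat to $d$'' branch and the interaction between a robber parked on $w$ and a robber loose in $H$ also need care. In the reverse implication, the only delicate point is the finite verification of the good-pair claim for the three forbidden subgraphs, which is routine.
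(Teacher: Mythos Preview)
Your proposal is correct, and the reverse implication is essentially the paper's argument: both of you exploit the forbidden-subgraph characterization of threshold graphs, placing the two robbers on a well-chosen pair of vertices of an induced $P_4$, $C_4$, or $K_2\cup K_2$ and verifying that whichever robber the cop reaches first, the survivor can step to a second vertex that the cop cannot reach in the next round. Your ``good pair'' language packages the case analysis more uniformly, but the content is the same.

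Where you genuinely diverge from the paper is the forward implication. The paper argues by contradiction: the cop sits on a vertex of degree $n-1$ or $n-2$, and if two vertices could be damaged then each robber's initial vertex would need a private escape vertex non-adjacent to the other robber, which manufactures an induced $P_4$, $C_4$, or $K_2\cup K_2$ and contradicts the threshold hypothesis. You instead give a direct, constructive cop strategy based on the nested-neighborhood property: capture the robber with the larger neighborhood first, so that every move available to the survivor lands in the closed neighborhood of the cop's current vertex, with a one-step retreat to the dominating vertex to handle the case where the survivor stalls on a non-adjacent vertex. Both arguments are short; the paper's has the virtue of using only the forbidden-subgraph description (so a reader need not recall the nested-neighborhood fact), while yours yields an explicit optimal cop strategy and makes transparent why a dominating vertex alone is not enough.
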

\begin{proof}
We will prove the contrapositive. First, assume $G$ is not a threshold graph. Therefore, $G$ contains at least one of the following as an induced subgraph on four vertices: $P_4$, $C_4$, or $K_2\cup K_2$. Let $A$ be the induced subgraph of $G$ which is isomorphic to one of these graphs and label the vertices of $A$ as shown in Figure \ref{fig:ForbiddenSub}.

\begin{figure}[h!]
    \centering
    \includegraphics{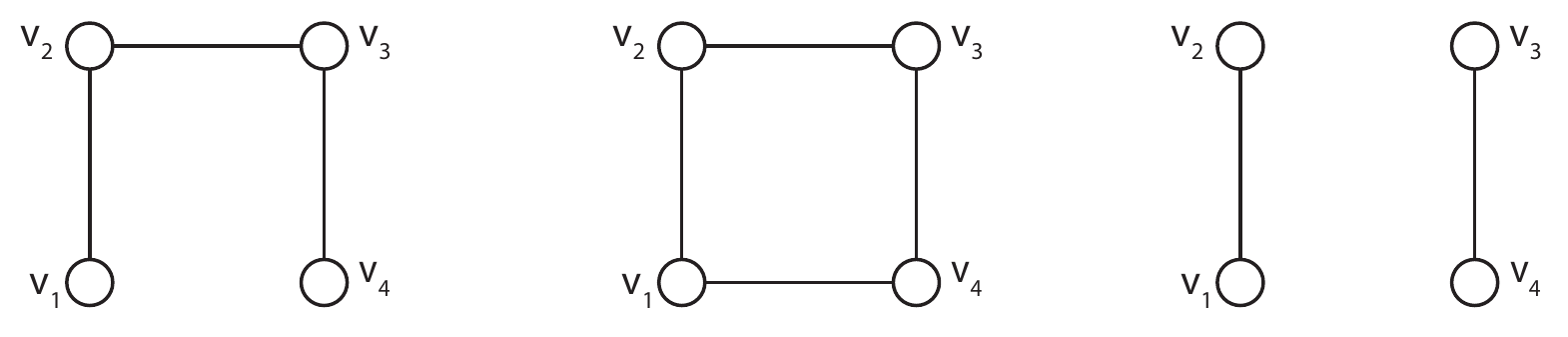}
    \caption{Vertex labelings for $P_4$, $C_4$, and $K_2\cup K_2$, respectively}
    \label{fig:ForbiddenSub}
\end{figure}

If the cop's initial placement is $v_1$, let the robbers initially place on $v_3$ and $v_4$. If $A$ is isomorphic to $P_4$ or $K_2\cup K_2$, the robbers evade capture during the first round. Thus, at least two vertices are damaged. If $A$ is isomorphic to $C_4$, the robber on $v_4$ is captured in the first round, but the robber on $v_3$ can damage $v_3$ and move to $v_2$ in this round. In the second round, the cop cannot capture the remaining robber since $v_2v_4\not\in E(G)$. Thus, the robber also damages $v_2$, so at least two vertices are damaged. If the cop's initial placement is $v_4$, at least two vertices are damaged by a similar argument.

If the cop's initial placement is a vertex $v$ other than $v_1$ and $v_4$, let the robbers initially place on $v_1$ and $v_4$. If $v$ is not adjacent to at least one of $v_1$ or $v_4$, then both robbers evade capture and damage their initial vertices in the first round. Thus, at least two vertices are damaged. 

If $v$ is adjacent to at least one of $v_1$ or $v_4$, without loss of generality, assume $v$ is adjacent to $v_1$. Then, during the first round, the cop can capture the robber on $v_1$. The robber on $v_4$ can damage $v_4$ and move to $v_3$ during round one. Since $v_1v_3\not\in E(G)$, the cop cannot capture the remaining robber during round two. Thus, the robber also damages $v_3$ and at least two vertices are damaged. Since the robbers can damage at least two vertices for any cop strategy, it is clear that $\dmg(G,2)\geq 2$.

Next, assume that $G$ is a threshold graph with no more than one isolated vertex. Since threshold graphs are formed by repeated additions of a single isolated vertex or a single dominating vertex, a threshold graph with no isolated vertices has a dominating vertex and a threshold graph with one isolated vertex has a vertex of degree $n-2$. In either case, let $v$ be this vertex of high degree ($n-1$ or $n-2$).

Assume for contradiction that $\dmg(G,2)\geq 2$ and consider the case where the cop initially places on $v$. If there is an isolated vertex in $G$, then the robbers must not place here since the robber on the isolated vertex would only damage one vertex and the other robber would be captured in the first round, contradicting $\dmg(G,2)\geq 2$. Therefore, the robbers both initially place on vertices adjacent to $v$. If the robbers initially place on the same vertex, they will both be captured in the first round, again contradicting $\dmg(G,2)\geq 2$. So, the robbers must initially place on two distinct vertices, $v_1$ and $v_4$.

Since the cop can always capture one robber during round one and $\dmg(G,2)\geq 2$, the remaining robber must be able to damage their initial vertex and move to a neighboring vertex. In order for this robber to damage a second vertex, they need to move to a neighboring vertex which is not adjacent to the cop's location. Thus, it must be that $v_1$ has a neighbor which is not adjacent to $v_4$ and $v_4$ has a neighbor which is not adjacent to $v_1$. Let $v_2$ and $v_4$ be the neighbors of $v_1$ and $v_3$, respectively. Since $v_1v_3,v_2v_4\not\in E(G)$ and $v_1v_2,v_3v_4\in E(G)$, the induced subgraph of $G$ on vertices $\{v_1,v_2,v_3,v_4\}$ is one of the graphs in Figure \ref{fig:ForbiddenSub}. However, since $G$ is a threshold graph, it does not contain any of these graphs as induced subgraphs. Therefore, it must be that $\dmg(G,2)=1$, as desired.
\end{proof}

\section{Concluding Remarks}
Many of the results in this paper focus on damage in the most general case with $s$ robbers. However, there are a few results that we proved for the $s=2$ case that we were unable to generalize. For the first such result, Proposition \ref{prop:maxDegreeThree}, we demonstrate that one potential generalization does not hold and in Conjecture \ref{conj:maxdeg}, we provide another generalization. 

In Section \ref{sec:extreme2robber}, we characterize the graphs with the highest and lowest $2$-robber damage number. In general, we know by Proposition \ref{prop:damageAtLeastSMinus1} and \ref{prop:damageAtMostNMinus2} that for a non-empty graph $G$ on $n\geq 2$ vertices, the maximum $s$-robber damage number is $n-2$ and the minimum is $s-1$. By Theorems \ref{thm:path}, \ref{thm:cycle}, and \ref{thm:star}, we already know that $\dmg(G;s)=n-2$ for paths, cycles, and spiders with no more than $s$ legs. However, this is not a complete characterization. It's worth noting that Proposition \ref{prop:maxDegreeThree} is used in the proof of Theorem \ref{thm:CharN-2}, which characterizes the graphs whose $2$-robber damage number is $n-2$. Therefore, it is possible that generalizing Proposition \ref{prop:maxDegreeThree}, possibly by proving Conjecture \ref{conj:maxdeg}, may lead to a characterization of graphs with $\dmg(G;s)=n-2$.



\end{document}